\def\thesection{\arabic{section}}
\def\theequation{\thesection.\arabic{equation}}
\newcommand{\ds} {\displaystyle}
\newcommand{\e}{\varepsilon}
\newcommand{\pa} {\partial}
\newcommand{\al} {\alpha}
\newcommand{\ba} {\beta}
\newcommand{\de} {\delta}
\newcommand{\ga} {\gamma}
\newcommand{\Om} {\Omega}
\newcommand{\ra} {\rightarrow}
\newcommand{\rp} {\rightharpoonup}
\newcommand{\De} {\Delta}
\newcommand{\la} {\lambda}
\newcommand{\noi} {\noindent}
\newcommand{\mb} {\mathbb}
\newcommand{\mc} {\mathcal}
\newcommand{\ld} {\langle}
\newcommand{\rd} {\rangle}
\def\theequation{\@arabic{\c@section}.\@arabic{\c@equation}}
\def\R{{I\!\!R}}
\def\N{{I\!\!N}}
\def\QED{\hfill {$\square$}\goodbreak \medskip}
\def\R{\mathbb{R}^N}
\newtheorem{Theorem}{Theorem}[section]
\newtheorem{Lemma}[Theorem]{Lemma}
\newtheorem{Proposition}[Theorem]{Proposition}
\newtheorem{Remark}[Theorem]{Remark}
\newtheorem{Definition}[Theorem]{Definition}
\begin{document}
	
	\title
	{  Regularity  results on a class of doubly nonlocal  problems }

	\author{  Jacques Giacomoni$^{\,1}$ \footnote{e-mail: {\tt jacques.giacomoni@univ-pau.fr}}, \ Divya Goel$^{\,2}$\footnote{e-mail: {\tt divyagoel2511@gmail.com}},  \
		and \  K. Sreenadh$^{\,2}$\footnote{
			e-mail: {\tt sreenadh@maths.iitd.ac.in}} \\
		\\ $^1\,${\small Universit\'e  de Pau et des Pays de l'Adour, LMAP (UMR E2S-UPPA CNRS 5142) }\\ {\small Bat. IPRA, Avenue de l'Universit\'e F-64013 Pau, France}\\  
	$^2\,${\small Department of Mathematics, Indian Institute of Technology Delhi,}\\
	{\small	Hauz Khaz, New Delhi-110016, India } }

	\date{}
	
	\maketitle

		\begin{abstract}
		\noi The purpose of this article is twofold. First, an issue of regularity of weak solution to the problem $(P)$ (See below) is addressed. Secondly, we investigate the question of $H^s$ versus $C^0$- weighted minimizers of the functional associated to problem $(P)$ and then give applications to existence and multiplicity results. 
%		In this article, we study the regularity of the weak solution to nonlocal elliptic \textcolor{red}{equations} involving  \textcolor{red}{ a Choquard type  nonlinearity}. 
		
		\medskip
		
		\noi \textbf{Key words}: Choquard equation, fractional Laplacian, regularity, uniform bound, singular nonlinearity, local minimizers.  
		
		\medskip
		
		\noi \textit{2010 Mathematics Subject Classification:   35B45, 35R09, 35B65, 35B33 }

	\end{abstract}
\section{Introduction}
In this article we will study the following problem: 
\begin{equation*}
(P)\;
\left\{\begin{array}{rllll}
(-\De)^s u
&=g(x,u)+ \left(\ds \int_{\Om}\frac{F(u)(y)}{|x-y|^{\mu}}dy\right) f(u)  \; \text{in}\;
\Om,\\
u&=0 \; \text{ in } \R \setminus \Om
\end{array}
\right.
\end{equation*}
where $\Om$ is a smooth bounded domain in $\mathbb R^n, N\ge 2$, $s \in (0,1)$,  $\mu <N$, $g:\Om\times \mathbb R\to \mathbb R$ Carath\'edory function, $f:\mathbb R \rightarrow \mathbb R$  is a continuous  function and $F$ is the primitive of $f$.  Here the operator  $(-\De)^s$ is the fractional Laplacian   defined  up to a positive multiplicative constant as 
\begin{align*}
(-\De)^su(x)=\text{P.V. } \int_{ \R} \frac{u(x)-u(y)}{|x-y|^{N+2s}} dy
\end{align*}
where P.V. denotes the Cauchy principal value. 

The existence and  regularity  of weak solutions have been a fascinating topic  for the researchers for a long time. The work on Choquard equations was started with the  quantum theory of a polaron model given by S. Pekar \cite{pekar}.  In 1976, in the modeling of a one component plasma,  P. Choquard \cite{ehleib} used the following equation with $\mu=1,\; p=2$ and $N=3$:
\begin{equation}\label{ch26}
-\De u +u = \left(\frac{1}{|x|^{\mu}}* F(u)\right) f(u) \text{ in } \mathbb{R}^3
\end{equation} 
where $f(u)=|u|^{p-2}u$ and $F^\prime =f$. In \cite{moroz2}, Moroz and Schaftingen   established the existence of a ground state solution  and the regularity of weak solutions of  the  problem \eqref{ch26} in higher dimensions  $N \geq 3 , \mu \in (0,N)$ and with more general functions $F\in C^1(\mathbb{R},\mathbb{R})$ satisfying certain growth conditions.  %F^\prime= f$. Moreover, $f$ satisfies the following properties:\\
%$(f_1)$ there exists $C>0$  such that $ |sf(s)| \leq %C(|s|^{\frac{2N-\mu}{N}}+ |s|^{\frac{2N-\mu}{N-2}})$. \\
%$(f_2)$  $\ds \lim_{s \ra 0} \frac{F(s)}{|s|^{\frac{2N-\mu}{N}}}=0, \;  \ds\lim_{s \ra \infty} \frac{F(s)}{|s|^{\frac{2N-\mu}{N-2}}}=0$ and there exists $s_0  \in \mathbb{R}\setminus \{ 0\}$ such that $F(s_0)\not =0$. \\
For more results on the existence of solutions we refer to \cite{moroz4,moroz5} and the references therein.  In \cite{yang}, Yang and Gao  studied the  Brezis-Nirenberg type result for the  following equation 
\begin{equation*}
-\Delta u = \la u+ \left(\int_{\Om}\frac{|u(y)|^{2^*_{\mu}}}{|x-y|^{\mu}}dy\right)|u|^{2^*_{\mu}-2}u \text{ in } \Om, \quad
u=0 \text{ on } \pa \Om, 
\end{equation*}
\noi where  $\Om\subset \mb R^N, N\ge 3$ is a bounded domain having smooth boundary $\pa \Om$,  $\la>0$, $0<\mu<N$ and $ 2^*_\mu = \frac{2N-\mu}{N-2}$. Later, many researchers studied the Choquard equation for the  existence and  multiplicity  of solutions, for instance  see \cite{alves,yang1, ts} and references therein. \\

%\textcolor{blue}{[give more details about methods in particular for a priori bounds and limits of the results]}\textcolor{black}{[We have already results regarding a  priori bounds and limits of the results in page 3 in the paragraph starting as  "the main purpose of this article..."]} 
On the other hand,  in recent years, the subject of  nonlocal elliptic equations involving fractional Laplacian 
 has gained more popularity because of many  applications such as continuum mechanics,  game theory and phase transition phenomena. 
% have been studied extensively due to its wide range of  application in physical models. 
For an extensive survey on fractional Laplacian and its applications, one may refer to \cite{valdi, stinga} and references therein. 
 The nonlocal equations with Hartree-type nonlinearities were used to model the dynamics of pseudo-relativistic boson stars. In fractional quantum
mechanics, fractional Schr\"odinger equations play an important role, for instance see \cite{frank,zhang,ts}.  For the existence and multiplicity results on fractional Laplacian, readers can refer to \cite{servadei} and references therein. For the doubly nonlocal problem, precisely, the nonlocal elliptic equation involving fractional Laplacian and Choquard type nonlinearity, there are articles which discuss the existence and multiplicity of solutions,  we cite  \cite{ambrosio,avenia,pucci,zhang} and references therein, with  no attempt to provide a complete list. 
%For the works on existence and multiplicity to the problem in fractional Laplacian with Choquard type nonlinearity we refer \cite{ts}. 

  %There is ample amount of paper on the existence of solution of the following equation
%\begin{equation}\label{ch23}
%(-\Delta)^s u = f(x,u)\text{ in } \Om, \quad
%u=0 \text{ in } \R\setminus \Om, 
%\end{equation}
%where $f$ is a Caratheodory function.
 %We cite  \cite{servadei} for the works of existence of solutions  to problem \eqref{ch23}. 
 Regularity results   about problem involving fractional  diffusion  are also attracting a   large number of researchers.
% Since {\color{red} ... many researchers have explored  this subdivision of elliptic equations (refer a survey paper on regularity theory here. I AM NOT ABLE TO FIND A SURVEY PAPER OR A BOOK FOR THE REFERENCE PURPOSE)} 
%  The topic of regularity of weak solutions to  doubly nonlocal elliptic is more sensitive and require more skillfulness. 
 % \textcolor{blue}{[give some references of Silvestre,  Servadei, Valdinoci, Stein, Ros-Oton Serra, etc. with methods]}.
  Consider the following nonlocal  problem  
  \begin{equation}\label{ch24}
  (-\Delta)^s u =g\text{ in } \Om, \quad
  u=h \text{ in } \R\setminus \Om. 
  \end{equation} The interior regularity of    solutions to \eqref{ch24} is primarily determined by Caffarelli and Silvestre.  In \cite{caffe1}, authors developed the $C^{1+\al}$ interior regularity for viscosity solutions to
 nonlocal equations with bounded measurable coefficients. For the convex equation, authors proved  $C^{2s+\al}$ regularity in \cite{caffe2} while in \cite{caffe3},  authors  established  a perturbative theory for non translation invariant equations.  In \cite{silves}, Silvestre studied  regularity of weak solutions to  free boundary problem.  
 For the boundary regularity, Ros-Oton and Serra \cite{RS}  studied the regularity of weak solutions to \eqref{ch24} with $h=0$ and  $g \in L^\infty(\Om)$. By using  a suitable
 upper barrier and the interior regularity results for the fractional Laplacian they prove that $u\in C^s(\mathbb R^N)$ and $\|u\|_{C^s} \le c \|g\|_{L^\infty(\Om)}$ for some constant $c$. Moreover, authors  established  a fractional analog of the Krylov boundary Harnack method to further prove $u \in C_d^{0,\al}(\overline{\Om})$ for some $\al \in (0,1)$.
 In \cite{RS1}, authors proved  the  high integrability  of the weak solution by using the   regularity of Riesz potential established in  \cite{stein}. 
%  For more results on regularity of solutions to stationary problems involving fractional Laplacian, one can refer  to \cite{RS1,silves} and reference therein. 
  In \cite{adi}, authors discussed the existence and  regularity of  weak solution to  the following problem
 \begin{equation*}
 (-\De)^s u
 =u^{-q}+  f(u)  \; \text{in}\;
 \Om,\; 
 u=0 \; \text{ in } \R \setminus \Om
 \end{equation*}
 where $q>0$ and the function  $f$ is of subcritical growth. When $f$ has  critical growth then the question of existence and regularity have been answered in \cite{gts1}. 
% In \cite{avenia}, dAvenia, Siciliano and Squassina  prove the regularity of weqk solutions to the following problem 
%	 Attempts were made in \cite{avenia,shen} and references therein to answer the  question of  regularity of solutions fractional Laplacian and Choquard nonlinearity in whole space $\R$.

%  \textcolor{blue}{ Should we say that there are only few results that apply to special cases to highlight our approach?]}

Despite the ample amount of  research on  doubly nonlocal problems,  there is very little
done in respect of regularity of weak  solutions to these problems. For instance, in \cite{avenia}, authors proved   the regularity of a ground state solution of doubly nonlocal equation with subcritical growth in the sense of Hardy-Littlewood-Sobolev inequality, by generalizing the idea of \cite{moroz4} in fractional framework. In \cite{su}, authors establish the $L^\infty(\mathbb R)$ bound of the nonnegative ground state solution of doubly local problem with critical growth in the sense of Hardy-Littlewood-Sobolev inequality under the assumption that $\mu < \min\{ N, 4s\}$. 

 In \cite{yangjmaa}, Gao and Yang  studied the Dirichlet problem involving  Choquard nonlinearity with Laplacian operator. 
 Here authors aim to prove the regularity for  weak solutions. The boot-strap techniques    as it is  developed in \cite{yangjmaa} work for the subcritical growth 
% in the sense of Hardy-Littlewood-Sobolev inequality and the Sobolev inequality 
 and seems to fail in handling the critical non linearity in the sense of Hardy-Littlewood-Sobolev inequality.  For the critical case, Moroz and Schaftingen \cite{moroz2}, studied problem \eqref{ch26} and prove the $W^{2,p}_{\text{loc}}(\mathbb R^N), \; p>1$,  regularity of the weak solution for  problems in the whole space without a perturbation term $g(x,u)$. The techniques given in \cite{moroz2} cannot be straightforward carried to problem $(P)$ in a general setting.   The regularity of positive solution to the following singular problem 
 \begin{equation}\label{ch34}
 -\De u
 = u^{q-1} + \left(\ds \int_{\Om}\frac{F(u)(y)}{|x-y|^{\mu}}dy\right) f(u)  \; \text{in}\;
 \Om,
 u=0 \; \text{ in } \R \setminus \Om, ~ 0<q<1
 \end{equation}
 was also an open problem. 
 
 Motivated by the above discussion and  the stated issues, the  first part of the  present article is intended to  address the question of  $L^\infty(\Om) $ bound for weak solutions of the problem $(P)$ covering  large classes of $f$ and $g$.  Since once  $L^\infty(\Om)$ is there  then one can use the result given by Ros-Oton \cite{RS,silves} coupled with Hardy-Littlewood-Sobolev inequality, to prove the desired  regularity  results. 
  To prove the $L^\infty(\Om)$ bound,  we  develop an unified approach handling both subcritical and critical case of the   perturbation  $g$. 
In this article we also provide an answer to the regularity of weak solutions to doubly nonlocal equation involving singular nonlinearity, particularly  problem \eqref{ch34}. The existence and multiplicity  of solutions to problem \eqref{ch34},  is specially address in  \cite{jds}.  The novelty of the obtained results here is that they hold true for all $\mu<N$, contrasting to previous regularity results in literature.  
 The techniques and tools which are used here to prove the $L^\infty(\Om)$ estimate are contemporary and new. Precisely, we extend further the classical Brezis-Kato techniques \cite{kato} to improve the integrability of weak solutions to (P). In addition, we  mention that to the best of our knowledge, there is no  article which establish the proof of  $L^\infty(\Om)$ bound  to problem involving singular nonlinearity.  The  results in this article can be used similarly  to  Laplacian operator  (that is, $s=1$) and are also new to the literature.    
 
 The second part of this article is destined to  prove the $H^s$ versus $C^0$- weighted minimizers. That is, we show that the local minima with respect to $C_d^0(\overline{\Om})$ topology will also be a local minima with respect to $X_0$ topology. 
 In variational problems this result illustrate   a significant role as it helps to prove that the solutions to constraint minimization of the energy functional emerge as solutions to   unconstraint local minimization  of the energy functional.  
 This procedure of constraint minimizations has ample amount of applications such as to prove the  existence and multiplicity of solutions to elliptic problems, for instance see Theorem \ref{thmch4}.

 In case of local framework this result was first  done by Brezis and Nirenberg \cite{niren}.
  Here authors prove that local minima in $C^1 $ will remain so in $H^1$ topology despite of the fact that latter one is weaker than the former one.  In fractional framework, this result is proved by Iannizzotto, Mosconi and Squassina \cite{squassina}. 
  But in case of nonlocal nonlinearity, in particular, Choquard equation, a particular  case to our result had been answered by \cite{yangjmaa} for the Laplacian operator.  For the  general nonlinearity, this issue is recently posed as an open problem in \cite{ts}. 
  In this article, we also provide a  full answer to this  open problem. Since there is significant amount of difference in handling doubly nonlocal problem,  so  we cannot stick around the tools given  in \cite{niren,squassina} to  establish the  result. 
  
%In this article we prove  the $L^\infty(\Om)$ of the problem $(P)$ with  perturbation $G$.\textcolor{blue}{[again a repetition!]}
%With this introduction, 
\begin{Remark}
	We would like to remark that  the results  of  our article can be adapted to  the following fractional Schr\"odinger problem
	\begin{equation*}
	(-\De)^s u + Vu 
	=g(x,u)+ \left(\ds \int_{\Om}\frac{F(u)(y)}{|x-y|^{\mu}}dy\right) f(u)  \; \text{in}\;
	\Om,	u=0 \; \text{ in } \R \setminus \Om,
	\end{equation*}
	where $ V\in L^2(\Om)$ and $	(-\De)^s + V$ should be coercive in the energy space $X_0$. 
\end{Remark}

\section{Functional framework and main results}
This section of the article is intended to provide the fractional Sobolev space setting. For the complete and rigid details, one can refer \cite{nezzaH,servadei}. Further in this section we  state the main results of  current article with a short sketch  of proof. \\
For $0<s<1$, the fractional Sobolev space is defined as 
\begin{align*}
H^{s}(\R)= \left\lbrace u \in L^2(\R): \int_{\R}\int_{\R} \frac{|u(x)-u(y)|^2}{|x-y|^{N+2s}}~dxdy <+ \infty \right\rbrace
\end{align*}
\noi endowed with the norm 
\begin{align*}
\|u\|_{H^{s}(\R)}:= \|u\|_{L^2(\R)}+ [u]_{H^s(\mathbb R^N)} = \|u\|_{L^2(\R)}+\left(\int_{\R}\int_{\R} \frac{|u(x)-u(y)|^2}{|x-y|^{N+2s}}~dxdy  \right)^{\frac{1}{2}}.
\end{align*}
\noi Consider the space 
\begin{align*}
X_0:=  \{u \in H^{s}(\R): u=0  \text{ a.e in  } \R \setminus \Om \}
\end{align*}
  equipped with the norm 
\begin{align*}
\ld u,v \rd = \int_{Q} \frac{(u(x)-u(y))(v(x)-v(y))}{|x-y|^{N+2s}}~dxdy 
\end{align*} 
where  $ Q= \R \setminus (\Om^c\times \Om^c)$. 
From the embedding results (\cite{servadei}), the space $X_0$ is continuously embedded  into $L^r(\R)$ with $ r\in [1,2^*_s]$ where $2^*_s= \frac{2N}{N-2s}$. 
The best constant $S_s$ is defined  
\begin{align}\label{ch27}
S_s= \inf_{u \in X_0\setminus \{ 0\}}   \frac{\int_{\R}\int_{\R} \frac{|u(x)-u(y)|^2}{|x-y|^{N+2s}}~dxdy }{\left( \int_{ \Om} |u|^{2^*_s}~dx\right)^{2/2^*_s}}. 
\end{align}
Let $d: \overline{\Om} \ra \mathbb{R}_+ $ by $d(x):= \text{dist}(x,\R\setminus \Om),\; x \in \overline{\Om}$. The best constant $S_H$ is defined as 
\begin{align}\label{ch35}
S_H= \inf_{u \in X_0\setminus \{ 0\}}   \frac{\int_{\R}\int_{\R} \frac{|u(x)-u(y)|^2}{|x-y|^{N+2s}}~dxdy }{ \int_{ \Om}\frac{ |u|^{2}}{d^{2s}}~dx}. 
\end{align}
Now we define the weighted H\"older-type spaces
\begin{align*}
& C^0_d(\overline{\Om}) := \bigg \{ u \in C^0(\overline{\Om}): u/d^s \text{ admits a continuous extension to } \overline{\Om}   \bigg\},\\
& C^{0,\al}_d(\overline{\Om}) := \bigg \{ u \in C^0(\overline{\Om}): u/d^s \text{ admits a } \al \text{ -H\"older continuous extension to } \overline{\Om}   \bigg\}
\end{align*}
endowed with the norms
\begin{align*}
\|u\|_{0,d}:= \|u/d^s\|_{\infty}, \quad \|u\|_{\al,d}:= \|u\|_{0,d}+ \sup_{x, y \in \overline{\Om}, x\not = y } \frac{|u(x)/(x)d^s- u(y)/d(y)^s|}{|x-y|^{\al}}
\end{align*}
respectively.  
We assume that $f$ satisfies the following growth conditions throughout the current article.\\
$(\mc F)  \quad F\in C^1(\mathbb{R},\mathbb{R})$, $  F^\prime= f$ and  there exists $C>0$  such that for all $t\in \mathbb R$, 
\begin{align*}
|tf(t)| \leq C(|t|^{\frac{2N-\mu}{N}}+ |t|^{\frac{2N-\mu}{N-2s}}).
\end{align*}
\begin{Definition} A function $u\in X_0$ with $u\equiv 0$ in $\mathbb R^N \backslash \Omega$ is said to be a solution  to (P) if 
	\begin{equation*}
	\int_{Q} \frac{(u(x)-u(y))(\phi(x)-\phi(y))}{|x-y|^2}~dxdy= \la\int_{\Om} g(x,u )\phi ~dx + \iint_{\Om\times \Om} \frac{F(u)f(u)}{|x-y|^\mu} \phi ~dxdy  
	\end{equation*}
	 for all $\phi \in X_0$. 
	\end{Definition}
Let $G(x,u) = \int_{0}^{u} g(x, \tau )~d\tau$ then  functional associated with problem $(P)$ is defined as 
\begin{align*}
J(u) = \frac{\|u\|^2}{2}-\int_{\Om}G(x,u)~dx -\frac{1}{2} \iint_{\Om\times \Om} \frac{F(u)F(u)}{|x-y|^\mu} ~dxdy, \text{ for all } u \in X_0. 
\end{align*}
 With this  functional framework, we state the  main results of the article. First we state the result about the regularity of weak solution to problem $(P)$. 
%  First we consider the problem motivated from the convex-concave growth problems.
%For the superlinear growth nonlinearity problems, we have 
\begin{Theorem}\label{thmch1}
		Let  $ g: \overline{\Om} \times \mathbb{R} \ra \mathbb{R}$ be  a Carath\'eodory function  satisfying
	\begin{align*}
	g(x, u) =		O(|u|^{2^*_s-1}), & \text{ if } |u|\ra \infty
	\end{align*}
	uniformly for all $ x \in \overline{\Om}$.   Then any solution $u \in X_0$ of  $(P)$ belongs to $L^\infty(\R) \cap C^s(\R)$. 	Furthermore, there exists positive constant $C$ depending on $N,\mu, s, |\Om|$  such that\\ $|u|_\infty\leq C (1+|u|_{2^*_s})^{\frac{2}{(2^*-1)(2^*-2)}}\left( 1+ \left( (1+|u|_{2^*_s}) \left(|u|_{2^*_s}^{2^*_s} + R^{2^*_s} |u|_{2^*_s-1}^{2^*_s-1} \right) \right)^{\frac{2^*_s}{2}} 
	\right)^{\frac{2}{2^*_s(2^*_s-1)}}$
	and $R>0$ large enough such that $	\left( \int_{|u|>R} |u|^{2^*_s}  ~dx\right)^{\frac{2^*_s-2}{2^*_s}} \leq \frac{1}{2 C (1+|u|_{2^*_s})}.$
%	Furthermore, there exists $S\in C(\mathbb{R}^2)$ such that $|u|_{\infty}\leq S(K_0, |u|_{2^*_s})$ where $K_0$ is a 
\end{Theorem}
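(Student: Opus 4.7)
The plan is to extend the classical Brezis--Kato bootstrap of \cite{kato} to the fractional Choquard setting. The starting point is $u\in X_0\hookrightarrow L^{2^*_s}(\Om)$, from which we climb integrability stepwise to reach $u\in L^\infty(\Om)$; once this is in hand, the $C^s(\R)$ conclusion follows from the boundary regularity of Ros-Oton and Serra recalled in the introduction, because the right-hand side of $(P)$ is then $L^\infty(\Om)$.

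For the iteration step, fix $\beta\ge 1$ and $M>0$, set $u_M=\min(|u|,M)$, and use $\varphi=u\,u_M^{2(\beta-1)}\in X_0$ as test function in the weak formulation. A Simon-type algebraic inequality for the fractional bilinear form yields a lower bound $c_\beta[u\,u_M^{\beta-1}]_{H^s(\R)}^2$, after which \eqref{ch27} produces $S_s\|u\,u_M^{\beta-1}\|_{L^{2^*_s}}^2$ on the left. On the right, the perturbation $g$ is controlled by $|g(x,u)|\le C(1+|u|^{2^*_s-1})$ and split over $\{|u|\le R\}\cup\{|u|>R\}$: the low part costs an $R^{2^*_s-1}$-factor, while on the high part
\begin{align*}
\int_{\{|u|>R\}}|u|^{2^*_s-2}u^2 u_M^{2(\beta-1)}\,dx \le \Big(\int_{\{|u|>R\}}|u|^{2^*_s}\Big)^{(2^*_s-2)/2^*_s}\|u\,u_M^{\beta-1}\|_{L^{2^*_s}}^2,
\end{align*}
and the $R$ prescribed in the statement makes the prefactor small enough to absorb into the left. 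For the Hartree term, Hardy--Littlewood--Sobolev with $q=\tfrac{2N}{2N-\mu}$ decouples
\begin{align*}
\iint_{\Om\times\Om}\frac{F(u)(y)f(u)(x)\varphi(x)}{|x-y|^\mu}\,dx\,dy \le C\,\|F(u)\|_{L^q(\Om)}\|f(u)\varphi\|_{L^q(\Om)},
\end{align*}
and $(\mc F)$ gives $|F(u)|^q\le C(|u|^2+|u|^{2^*_s})$ and $|tf(t)|^q\le C(|t|^2+|t|^{2^*_s})$; the first factor is controlled by $\|u\|_{L^2}+\|u\|_{L^{2^*_s}}$, while the second is treated with the same cutoff-at-$R$ / smallness-of-tail / absorption trick. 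Together these estimates produce
\begin{align*}
\|u\,u_M^{\beta-1}\|_{L^{2^*_s}}^2 \le A(\beta,R,|u|_{2^*_s}) + B(R,|u|_{2^*_s})\,\|u\,u_M^{\beta-1}\|_{L^{2^*_s}}^2
\end{align*}
with $B<\tfrac12$, and $M\to\infty$ by monotone convergence gives $u\in L^{\beta\cdot 2^*_s}(\Om)$.

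Iterating along $\beta_k=(2^*_s/2)^k$ and carefully tracking the multiplicative constants through the recursion, geometric summation converts the recurrence into the explicit $L^\infty(\Om)$ bound of the statement (the powers $2/((2^*_s-1)(2^*_s-2))$ and $2/(2^*_s(2^*_s-1))$ arise precisely as the limit of the geometric series in the iteration). Once $u\in L^\infty(\Om)$ is secured, HLS applied with $u\in L^{2^*_s}\cap L^\infty$ shows $F(u)\ast|x|^{-\mu}\in L^\infty(\Om)$, so the right-hand side of $(P)$ is bounded and the Ros-Oton--Serra estimate yields $u\in C^s(\R)$. The principal obstacle is the simultaneous criticality of $g$ and of the Hartree nonlinearity: ordinary Brezis--Kato, which handles coefficients $a(x)(1+|u|)$ with $a\in L^{N/(2s)}$, does not apply here because the Choquard term is a product of two nonlinearities coupled by a nonlocal kernel. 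Decoupling by HLS followed by the cutoff-at-$R$ / smallness-of-tail / absorption maneuver carried out simultaneously for both nonlinearities at every step is what makes the bootstrap close, and squeezing the quantitative form of the statement out of the iteration requires careful bookkeeping of constants depending on $|u|_{2^*_s}$ and $R$.
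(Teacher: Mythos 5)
Your plan is to run a single Moser iteration directly on the doubly nonlocal equation, decoupling the Hartree term at every step via Hardy--Littlewood--Sobolev with symmetric exponents $q=\tfrac{2N}{2N-\mu}$. This is precisely the strategy that the paper explicitly rules out for $\mu\ge 4s$ (see the remark after the sketch of proof of Theorems~\ref{thmch1}--\ref{thmch2}, referring to \cite{jds}). The exponents do not match: with $\varphi=u\,u_M^{2(\beta-1)}$ the left-hand side controls $\int |u|^{2^*_s}u_M^{(\beta-1)2^*_s}$, whereas your HLS estimate of the Hartree term produces $\bigl\|f(u)u\,u_M^{2(\beta-1)}\bigr\|_{L^q}^q\le C\int\bigl(|u|^{2}+|u|^{2^*_s}\bigr)u_M^{2(\beta-1)q}$, which carries the power $2(\beta-1)q$ on $u_M$. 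Absorption (or even a mere gain in integrability) requires $2q<2^*_s$, that is $\tfrac{4N}{2N-\mu}<\tfrac{2N}{N-2s}$, i.e.\ $\mu<4s$. Indeed, if at some stage $u\in L^p$, the constraint that the right-hand integral be finite forces $\beta\le 1+\tfrac{p-2^*_s}{2q}$, and the resulting new exponent $\beta\,2^*_s$ exceeds $p$ exactly when $2^*_s>2q$. For $\mu\ge 4s$ the iteration does not climb, and for $\mu$ close to $N$ it actually moves the wrong way. Theorem~\ref{thmch1} is stated for all $\mu<N$, so your argument does not prove it.

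The paper closes this gap by \emph{not} treating the Hartree nonlinearity inside the Moser scheme. It first runs a genuinely different bootstrap (Proposition~\ref{Propch1}) built on Lemma~\ref{lemch1} and the Moroz--Van Schaftingen estimate of Lemma~\ref{lemch4}, whose extra free parameter $\theta\in(1-\tfrac{\mu}{N},1+\tfrac{\mu}{N})$ lets one distribute the $|u|$-powers asymmetrically between the two slots of the Riesz kernel. That finite bootstrap only pushes $u$ into $L^r(\Om)$ for $r<\tfrac{2N^2}{(N-\mu)(N-2s)}$, but this range strictly contains $\tfrac{N}{N-\mu}$, which is enough to conclude that the Riesz potential $\int_\Om F(u)(y)\,|x-y|^{-\mu}\,dy$ lies in $L^\infty(\Om)$. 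After that the Choquard term collapses to a bounded potential times $f(u)$, the equation becomes $(-\De)^s u=k(x,u)$ with $|k(x,u)|\le C(1+|u|^{2^*_s-1})$, and \emph{only then} is the Moser iteration (Lemma~\ref{lemch5}) applied, with the Hartree term no longer appearing at all. The quantitative $L^\infty$ constant in the theorem is extracted from that final, purely local, iteration. You would need to rebuild your argument along these lines: a preliminary improvement of integrability for $u$ (and hence for $F(u)$) that does not attempt to reach $L^\infty$ and exploits the $\theta$-freedom of Lemma~\ref{lemch4}, followed by boundedness of the Riesz potential, followed by Moser iteration on the reduced equation.
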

Next we consider the regularity for singular problems.
\begin{Theorem}\label{thmch2}
%Let 	$ q \in (0,1)$ and $ g(x,u)=u^{q-1}$.   Then any positive solution $u \in X_0$ of  $(P)$ belongs to $L^\infty(\R) \cap C^s(\R)$. Moreover,  there exists a continuous function such that $M\in C((\mathbb{R}^+)^2)$ such that $|u|_\infty\leq M(R,|u|_{2^*_s}) $ where  $R$  is given by \eqref{Restimate}. 
Let 	$ q \in (0,1)$ and $ g(x,u)=u^{q-1}$.   Then any positive solution $u \in X_0$ of  $(P)$ belongs to $L^\infty(\R) \cap C^s(\R)$. Moreover,  there exists $C>0 $ depending on $N,\mu, s $ and  $|\Om|$ and a positive constant $C_1$  s.t.
\begin{align*}
|u|_{\infty} \leq 1+  C_1\mc S_1^{\frac{2}{(2^*-1)(2^*-2)}}\left( 1+ \left( \mc S_1\left(|(u-1)^+|_{2^*_s}^{2^*_s} + R^{2^*_s} |(u-1)^+|_{2^*_s-1}^{2^*_s-1} \right) \right)^{\frac{2^*_s}{2}} 
\right)^{\frac{2}{2^*_s(2^*_s-1)}}  
\end{align*}
 with $\mc S_1= \max\{ 1, C(N,\mu ,|\Om|) |u|_{2^*_s} \}$, $R>0$ such that  $\left( \int_{|u|>R} |(u-1)^+|^{2^*_s}  ~dx\right)^{\frac{2^*_s-2}{2^*_s}} \leq \frac{1}{2(2^*_s+1)\mc S_1}.$
\end{Theorem}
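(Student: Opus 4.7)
My strategy is to reduce Theorem \ref{thmch2} to the $L^\infty$ scheme developed for Theorem \ref{thmch1}, exploiting the fact that when $q\in(0,1)$ the singular term $u^{q-1}$ is bounded by $1$ on the super-level set $\{u>1\}$. I split $u=\min(u,1)+(u-1)^+$ and work with the excess $w:=(u-1)^+$, which lies in $X_0$ because the map $t\mapsto(t-1)^+$ is $1$-Lipschitz and $u=0$ on $\R\setminus\Om$. Testing the equation against truncations supported on $\{u>1\}$ turns the singular source into a bounded perturbation there, while the Choquard contribution is controlled uniformly via hypothesis $(\mc F)$ and Hardy--Littlewood--Sobolev, exactly as in Theorem \ref{thmch1}.

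More concretely, for $\beta\ge 1$ I would use Moser-type test functions of the form $\varphi=(w+1)^\beta-1$, which lie in $X_0$ and vanish on $\{u\le 1\}$. A standard fractional algebraic inequality yields
\begin{align*}
(u(x)-u(y))(\varphi(x)-\varphi(y))\ \ge\ c_\beta\,\big|W(x)-W(y)\big|^{2},\qquad W:=(w+1)^{(\beta+1)/2}-1,
\end{align*}
so by the fractional Sobolev embedding $|W|_{2^*_s}^{2}\le C(\beta)\big(\int_{\Om}u^{q-1}\varphi\,dx+\iint\tfrac{F(u)f(u)\varphi}{|x-y|^{\mu}}\,dxdy\big)$. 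On $\{u>1\}$ the singular term is dominated by $\varphi\le(w+1)^\beta$, and using $(\mc F)$ together with Hardy--Littlewood--Sobolev the Choquard integral is bounded by $\mc S_1\,|(w+1)^{(\beta+1)/2}|_{2^*_s}^{2}$ plus lower-order contributions, where $\mc S_1=\max\{1,C(N,\mu,|\Om|)|u|_{2^*_s}\}$ absorbs the global Choquard mass; this is exactly why $\mc S_1$ appears in the final bound.

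Iterating this inequality on $\beta$ along the geometric sequence $2^*_s/2$ in the spirit of the Moser/Brezis--Kato scheme of Theorem \ref{thmch1}, and choosing $R$ large so that $\big(\int_{\{u>R\}}|w|^{2^*_s}\big)^{(2^*_s-2)/2^*_s}\le \frac{1}{2(2^*_s+1)\mc S_1}$ to absorb the first step, produces the stated upper bound on $|w|_\infty$; since $u\le 1+|w|_\infty$ this is the announced $L^\infty$ estimate. Once $u\in L^\infty(\Om)$, the right-hand side of $(P)$ is bounded (the Choquard convolution is globally bounded by HLS; the singular term is bounded away from $\partial\Om$ and near $\partial\Om$ is controlled by $d^{s(q-1)}$, which lies in $L^p(\Om)$ for $p<(s(1-q))^{-1}$ thanks to the Hopf-type lower bound $u\gtrsim d^s$). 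The Ros-Oton--Serra boundary regularity with a suitable upper barrier then yields $u\in C^s(\R)$.

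The main technical obstacle is the nonlocal action of the truncation: $w(x)-w(y)$ can be much smaller than $u(x)-u(y)$ when only one of $x,y$ lies in $\{u>1\}$, so the inequality above must be derived by carefully splitting the domain $Q$ according to whether the two arguments belong to $\{u>1\}$ or $\{u\le 1\}$, and by exploiting the sign property $(u(x)-u(y))(\varphi(x)-\varphi(y))\ge 0$ on the mixed cells. This is where the proof genuinely diverges from Theorem \ref{thmch1} and where the specific term $|(u-1)^+|_{2^*_s-1}^{2^*_s-1}$ in the final bound appears, as a lower-order contribution emanating from precisely these mixed regions.
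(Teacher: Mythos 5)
Your strategy of passing to $w=(u-1)^+$ and exploiting the bound $u^{q-1}\le 1$ on $\{u>1\}$ is indeed the same device the paper uses (via a smooth approximation $\psi_\e$ of $(t-1)^+$), and your plan to invoke the Ros-Oton--Serra regularity at the end is in the same spirit. However, there is a genuine gap at the heart of your argument: you attempt to bound the Choquard contribution directly by applying Hardy--Littlewood--Sobolev inside the Moser iteration, producing $\mc S_1\,|(w+1)^{(\beta+1)/2}|^2_{2^*_s}$ plus lower-order terms. This does not close in the critical regime allowed by $(\mc F)$. With only $u\in L^{2^*_s}$, one knows $F(u)\in L^{\frac{2N}{2N-\mu}}$, which by HLS is just barely enough to make the Choquard energy finite; it does \emph{not} yield $\bigl|\int_{\Om}\frac{F(u)(y)}{|x-y|^{\mu}}dy\bigr|_{\infty}<\infty$, and hence does not give a pointwise source bound of the form $\mc S_1(1+|w|^{2^*_s-1})$ needed to run a Moser scheme closed under $\beta\mapsto\frac{2^*_s}{2}\beta$. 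The paper explicitly remarks that a naive Moser iteration of this kind only works under the extra restriction $\mu<\min\{N,4s\}$ and a specific $f$. The essential missing ingredient is the Brezis--Kato type improvement of integrability (Lemmas \ref{lemch3}, \ref{lemch4}, \ref{lemch1} and Propositions \ref{Propch1}, \ref{Propch3}): by approximating $H=F(u)/u$, $K=f$ with truncations, constructing coercive bilinear forms $B_n$, and iterating, one first obtains $u\in L^p(\Om)$ for $p$ up to $\frac{2N^2}{(N-\mu)(N-2s)}$; this raises $F(u)$ strictly above $L^{N/(N-\mu)}$ and is what makes the Riesz potential $\int_\Om\frac{F(u)}{|x-y|^\mu}dy$ an $L^\infty(\Om)$ function, which in turn produces the constant $\mc S_1$ of the statement. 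In the singular case this step is even more delicate: Proposition \ref{Propch3} uses the lower barrier $\underline u\le u$ (the solution of $(-\De)^s v=v^{q-1}$) to rewrite the singular term as $\de(\ba)\chi_{\{u<\ba\}}u/d^{2s}$ and absorb it by the fractional Hardy inequality, a device absent from your proposal and required before any Moser iteration can start.

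A secondary gap concerns the final $C^s$ step. Once $u\in L^\infty(\Om)$, the Choquard term has a bounded kernel, but the source still contains $u^{q-1}$, which blows up like $d^{s(q-1)}$ near $\partial\Om$: you correctly note this is only in $L^p$ for $p<(s(1-q))^{-1}$, but Ros-Oton--Serra's Theorem (Proposition \ref{Propch5}) requires an $L^\infty$ right-hand side, so it cannot be invoked directly. The paper instead sandwiches $u$ between $\underline u$ and the supersolution $\overline u$ of $(-\De)^s v=v^{-q}+c$, places $u\in X_0\cap L^\infty(\Om)\cap C^+_{\phi_1}(\Om)$, and then cites a regularity theorem tailored to singular fractional problems \cite{gts1}. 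Your argument would need such a substitute for the boundary step as well.
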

\begin{Remark}
Replacing $u^{q-1}$ by $g(x,u)$ with $g:\Om\times \mathbb R^+\backslash\{0\}\to \mathbb R^+$ satisfying $g(x,t)t^{1-q}$ uniformly bounded as $t\to 0^+$ and $t\to g(x,t)$ nonincreasing for a.e $x\in \Om$, then Theorem \ref{thmch2} holds.
\end{Remark}
To achieve the intended goal in the above results, we first prove the non local version of Brezis-Kato estimates (See Lemma \ref{lemch3} and \ref{lemch4}) in a similar manner as in \cite{kato, moroz2}. Subsequently we construct a sequence of  coercive, bilinear maps. This sequence  allows us to  further construct a sequence of function $u_n$ will converge weakly to $u$ (weak solution to $(P)$). Then  we inherit some  classical technique of Brezis-Kato \cite{kato, moroz2}. We prove that $u_n \in L^p(\Om) $ with $2^*_s<p< p_0$ for some $p_0$.  Consequently, $u \in L^p(\Om) $ with $2^*_s<p< p_0$. Using these estimates,  we establish
\begin{align*}
\int_{\Om}\frac{F(u(y))}{|x-y|^{\mu}}dy \in L^\infty(\Om). 
\end{align*}
 Then by Moser iterations proved  established in Lemma  \ref{lemch5},  we prove that $u \in L^\infty(\Om) $. For the $C^{0,\al}(\overline{\Om})$ regularity we can conclude by using   Ros-Oton and Serra \cite{RS} mentioned above. We mention here that the  construction of  the bilinear forms for the Theorem \ref{thmch2} is most sensitive part and require more technicality. We remark that if we use Moser iterations without employing the method we present  above then we can achieve   $ L^\infty(\Om) $ bound of weak solutions to $(P)$ under the  additional assumption $\mu < \min \{N, 4s  \}$ and $f= |u|^{\frac{N-\mu+2}{N-2s}}$, see for instance \cite{jds}. To incorporate  the case $\mu \geq  \min \{N, 4s  \}$, we develop the    above stated  unified course of steps. 
%  For the $C^s$ regularity we used the result given by Ros-oton and Serra \cite{RS} mentioned above.

The second main aim of this paper is to give an application of $L^\infty(\Om)$ estimate. In that direction we have the following.
\begin{Theorem}\label{thmch3}
		Let  $ g: \overline{\Om} \times \mathbb{R} \ra \mathbb{R}$ be  a Carath\'eodory function  satisfying
	\begin{align*}
	g(x, u) =		O(|u|^{2^*_s-1}), & \text{ if } |u|\ra \infty
	\end{align*}
	uniformly for all $ x \in \overline{\Om}$. Let $v_0 \in X_0$. Then the following assertions holds are equivalent:
	\begin{itemize}
			\item[(i)] there exists $\e>0 $ such that $ J(v_0+v) \geq J(v_0) $ for all $  c\in X_0,\;  \|v\|\leq \e$. 
		\item[(ii)] there exists $\rho>0$ such that $J(v_0+ v) \geq J(v_0)$ for all $ v \in X_0 \cap C^0_d(\overline{\Om})$, $\|v\|_{0,\de}\leq \rho$. 
		\end{itemize}
\end{Theorem}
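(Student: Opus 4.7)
The plan is to treat the two implications separately. Implication (i)$\Rightarrow$(ii) follows from a second order expansion of $J$ around the critical point $v_0$, exploiting that $C^0_d$-small perturbations are $L^\infty$-small. Implication (ii)$\Rightarrow$(i) is the genuine Brezis--Nirenberg-type statement and relies on a constrained minimization in $X_0$ combined with the regularity provided by Theorem \ref{thmch1}.

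For (i)$\Rightarrow$(ii), the hypothesis forces $\langle J'(v_0),v\rangle=0$ for every $v\in X_0$. For $v\in X_0\cap C^0_d(\overline\Omega)$ with $\|v\|_{0,d}\le\rho$, the pointwise bound $|v(x)|\le\rho\,d(x)^s$ yields $\|v\|_\infty\le C\rho$. Writing
\begin{align*}
J(v_0+v)-J(v_0)=\tfrac12\|v\|^2-\mc R(v),
\end{align*}
where $\mc R(v)$ collects the higher-order contributions of $g$ and of the Choquard nonlinearity, I would combine $(\mc F)$, the critical growth of $g$ and the Hardy--Littlewood--Sobolev inequality to obtain
\begin{align*}
|\mc R(v)|\le\omega(\rho)\,\|v\|(1+\|v\|)\quad\text{with }\omega(\rho)\to 0\text{ as }\rho\to 0.
\end{align*}
Splitting into the two cases $\|v\|\le\varepsilon$ (where (i) applies directly) and $\|v\|>\varepsilon$ (where $\tfrac12\|v\|^2$ dominates $\mc R(v)$ once $\rho$ is chosen small enough) concludes (ii).

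For (ii)$\Rightarrow$(i), argue by contradiction. Because $X_0\cap C^0_d(\overline\Omega)$ is dense in $X_0$, (ii) already implies $J'(v_0)=0$, so $v_0$ is a weak solution of $(P)$. Suppose (i) fails: then there is $(v_n)\subset X_0$ with $v_n\to 0$ in $X_0$ and $J(v_0+v_n)<J(v_0)$. For small $r>0$ define
\begin{align*}
m_r:=\inf\{J(v_0+v):v\in X_0,\ \|v\|\le r\}.
\end{align*}
The growth assumption $(\mc F)$ and Hardy--Littlewood--Sobolev make $v\mapsto J(v_0+v)$ weakly lower semicontinuous on the weakly closed ball $\{\|v\|\le r\}$, so the infimum is attained at some $w_r$, necessarily $w_r\neq 0$ since $m_r<J(v_0)$. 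Either $w_r$ is a free critical point of $v\mapsto J(v_0+v)$, or the Lagrange rule produces $\lambda_r\ge 0$ such that $u_r:=v_0+w_r$ satisfies
\begin{align*}
(1+\lambda_r)(-\Delta)^sw_r=[g(x,u_r)-g(x,v_0)]+K_r,
\end{align*}
with $K_r$ the difference of the Choquard nonlinearities at $u_r$ and $v_0$. Testing this identity against $w_r$ and using $m_r<J(v_0)$ bounds $\lambda_r$ uniformly. Theorem \ref{thmch1}, applied in its quantitative form with $\|u_r\|_{L^{2^*_s}}$ uniformly bounded, then gives a uniform $L^\infty$ bound on $u_r$; hence $\|w_r\|_\infty\to 0$ as $r\to 0$, and the boundary weighted-H\"older estimates of Ros-Oton--Serra upgrade this to $\|w_r\|_{0,d}\to 0$, contradicting (ii).

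The principal obstacle is the uniformity of the $C^0_d$ estimate for $w_r$ as $r\to 0$. This requires uniform control of the Lagrange multiplier $\lambda_r$ (to keep the perturbed operator uniformly coercive), uniform Hardy--Littlewood--Sobolev control of the Choquard potential $\int_\Omega F(u_r)(y)|x-y|^{-\mu}\,dy$, and a careful tracking of the constants in Theorem \ref{thmch1} so that they depend only on the uniformly bounded $L^{2^*_s}$ norm of the solution.
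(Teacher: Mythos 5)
Your plan reproduces the overall architecture of the paper's proof (constrained minimization, Lagrange multiplier, uniform $L^\infty$ and $C^{0,\alpha}_d$ estimates, compact embedding into $C^0_d$), but there is a genuine gap in the main direction (ii)~$\Rightarrow$~(i). The step ``the growth assumption $(\mathcal F)$ and Hardy--Littlewood--Sobolev make $v\mapsto J(v_0+v)$ weakly lower semicontinuous on the weakly closed ball, so the infimum is attained'' is false under the stated hypotheses. Both $g$, with Sobolev-critical growth $|u|^{2^*_s-1}$, and $f$, which $(\mathcal F)$ allows to reach the HLS-critical power $|u|^{\frac{N-\mu+2s}{N-2s}}$, make the two negative terms $-\int_\Omega G(x,v)\,dx$ and $-\tfrac12\iint F(v)F(v)|x-y|^{-\mu}$ non-weakly-continuous; along a weakly convergent sequence they can drop by a fixed amount due to concentration, so $J$ need not be weakly lower semicontinuous on $X_0$-balls, and your $w_r$ need not exist. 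The paper circumvents exactly this by first replacing $g,f$ with the truncations $g_m,f_m$ cut off at level $m$: the truncated functional $J_m$ has nonlinearities of linear growth, hence is weakly lower semicontinuous; one minimizes $J_{m_n}$ (with $m_n$ chosen so that $J_{m_n}(z_n)<0$) over $\bar B^X_{\varepsilon_n}(0)$, obtains a minimizer $w_n$ solving the truncated Euler--Lagrange equation with multiplier $\lambda_n\in(0,1]$, and only then applies the quantitative bound of Theorem \ref{thmch1} with $R=0$ (valid since $|w_n|_{2^*_s}\to 0$), the Ros-Oton--Serra $C^{0,\alpha}_d$ estimate, and the compact embedding $C^{0,\alpha}_d(\overline\Omega)\hookrightarrow C^0_d(\overline\Omega)$ to conclude $w_n\to 0$ in $C^0_d(\overline\Omega)$. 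The reduction of general $v_0$ to $v_0=0$ via the shifted functional $\widehat J$ (using $J'(v_0)=0$, which follows from (ii) by density) is a separate needed step that your sketch only implicitly assumes away.

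Your (i)~$\Rightarrow$~(ii) route is also shakier than necessary. The claimed remainder estimate $|\mathcal R(v)|\le\omega(\rho)\|v\|(1+\|v\|)$ with a modulus $\omega(\rho)\to 0$ requires a uniform-in-$x$ modulus of continuity for $g(x,\cdot)$, which a Carath\'eodory function with a mere growth bound does not provide; at best one gets a qualitative smallness from dominated convergence. The paper avoids any expansion: assuming (ii) fails, take $v_n\to v_0$ in $C^0_d(\overline\Omega)$ with $J(v_n)<J(v_0)$; uniform convergence plus dominated convergence sends both nonlinear terms of $J(v_n)$ to those of $J(v_0)$, hence $\limsup_n\|v_n\|^2\le\|v_0\|^2$; with $v_n\rightharpoonup v_0$ and weak lower semicontinuity of the norm this forces $\|v_n\|\to\|v_0\|$, whence $v_n\to v_0$ strongly in $X_0$, contradicting (i). You should replace the expansion argument by this compactness argument.
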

To prove the  above result we have modified the techniques which have been developed by  \cite{niren,squassina}.

As an application of the  $H^s$ versus $C^0$- weighted minimizers, in section 6, we proved the existence of weak solution to Choquard equation, which is also a local minimizer in $X_0$ topology (See Theorem \ref{thmch4}). To prove the desired result, instead by trapping the nonlinearity between sub and supersolution, we  generalize Perron's method for the doubly nonlocal problem \cite[Theorem 2.4]{struwe2}. An advantage to proceed by this alternative method is that we  don't need  strong assumptions on sub and supersolution except the fact, they belong to $X_0$. 
%\begin{Remark}
% We remark that one can prove $H^s$ versus $C^0_d$ (Theorem \ref{thmch3}) in singular case as well. But in this %case we need  a   strong  comparision  principle for an associated  Schr\"odinger problem with unbounded %nonnegative potential  to prove that Theorem \ref{thmch3}(ii) holds.  We point out here that the  result proved in   %\cite{servadei1} applies to bounded potentials case.    
%\end{Remark}

For simplicity of illustration, we set some notations. We denote $\|u\|_{L^p(\Om)} $ by  $|u|_p$ and  $\|u\|_{X_0}$ by $\|u\|$. $B^X_\rho(u),\bar{B}^X_\rho(u)\;(B^d_\rho(u),\bar{B}^d_\rho(u))$ denote the open and closed ball, centered at $u$ with radius $\rho$, respectively in $X_0\;(C^0_d(\overline{\Om}))$. The positive constant $C$ values change case by case.

Rest of the paper organized as follows: In section 3, we give some preliminary results. In section 4, we give some technical lemmas which will help us to prove the main theorems of the paper. In section 5, we prove the Theorem \ref{thmch1} and \ref{thmch2}. In section 6, we give the proof of Theorem \ref{thmch3} and provide an application to Theorem \ref{thmch3}.

\section{Preliminary results}
In this section we contribute  some preliminary results, though rather straightforward, do  not  appear explicitly in former literature,  and are worthy to archive  them here.  \\
The Hardy-Littlewood-Sobolev Inequality, foundational in study of Choquard equation is stated here.   
\begin{Proposition} \cite{leib}
	Let $t,r>1$ and $0<\mu <N$ with $1/t+\mu/N+1/r=2$, $f\in L^t(\mathbb{R}^N)$ and $h\in L^r(\mathbb{R}^N)$. There exists a sharp constant $C(t,r,\mu,N)$ independent of $f,h$, such that 
	\begin{equation*}\label{co9}
	\int_{\mathbb{R}^N}\int_{\mathbb{R}^N}\frac{f(x)h(y)}{|x-y|^{\mu}}~dydx \leq C(t,r,\mu,N) |f|_{t}|h|_{r}.
	\end{equation*}
%	If $t=r=2N/(2N-\mu)$, then 
%	\begin{align*}
%	C(t,r,\mu,N)=C(N,\mu)= \pi^{\frac{\mu}{2}}\frac{\Gamma(\frac{N}{2}-\frac{\mu}{2})}{\Gamma(N-\frac{\mu}{2})}\left\lbrace \frac{\Gamma(\frac{N}{2})}{\Gamma(\frac{\mu}{2})}\right\rbrace^{-1+\frac{\mu}{N}}.
%	\end{align*}
%	Equality holds in  \eqref{co9} if and only if $f\equiv (constant)h$ and 
%	\begin{align*}
%	h(x)= A(\gamma^2+|x-a|^2)^{(2N-\mu)/2},
%	\end{align*} 
%	for some $A\in \mathbb{C}, 0\neq \gamma \in \mathbb{R}$ and $a \in \mathbb{R}^N$. \QED
\end{Proposition}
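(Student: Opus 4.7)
The plan is to reduce the inequality to a weak-type convolution estimate and then close it with H\"older's inequality. First I would observe that the Riesz-type kernel $K(x):=|x|^{-\mu}$ belongs to the weak Lebesgue space $L^{N/\mu,\infty}(\mathbb{R}^N)$: the distribution function $|\{x\in\mathbb{R}^N:|x|^{-\mu}>\lambda\}|=c_N\lambda^{-N/\mu}$ is read off directly. Next I would invoke Young's convolution inequality in Lorentz spaces (O'Neil/Hunt): whenever $1/q+1=1/r+\mu/N$, convolution with $K$ maps $L^{r}(\mathbb{R}^N)$ continuously into $L^{q}(\mathbb{R}^N)$, with norm controlled by $|K|_{N/\mu,\infty}\,|h|_r$. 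Under the scaling condition $1/t+\mu/N+1/r=2$, an arithmetic check gives $1/t+1/q=1$, so pairing $K\ast h$ with $f$ by H\"older yields the desired bound with a constant $C=C(t,r,\mu,N)$.

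A second route I would keep in reserve is via rearrangement. Assuming $f,h\ge 0$ (the general case follows by replacing $f,h$ with $|f|,|h|$), I would use the layer-cake representation $f=\int_0^\infty \mathbf{1}_{\{f>\alpha\}}\,d\alpha$ together with the Riesz rearrangement inequality to reduce the three-fold integral to integrals of the form $\iint \mathbf{1}_{B_R}(x)|x-y|^{-\mu}\mathbf{1}_{B_S}(y)\,dx\,dy$ with concentric balls. A scaling argument in $R$ and $S$ together with dimensional counting then reproduces exactly the exponent balance $1/t+\mu/N+1/r=2$ and yields a finite constant. This route has the pedagogical advantage of making transparent why the stated arithmetic condition on $(t,r,\mu,N)$ is the only one for which any inequality can hold: both sides must scale identically under $f\mapsto f(\lambda\cdot)$, $h\mapsto h(\lambda\cdot)$.

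The principal obstacle is the identification of the \emph{sharp} constant and of the extremizers, carried out by Lieb via the competing symmetries method: one alternates symmetric decreasing rearrangement with the conformal inversion $x\mapsto x/|x|^2$, shows the iteration is nondecreasing on the Hardy--Littlewood--Sobolev functional, and identifies the fixed point as a conformally invariant profile of the form $(1+|x|^2)^{-N/t}$ (and symmetrically for $h$). Proving convergence of this iteration and extracting the explicit value of $C(t,r,\mu,N)$ is the delicate part; however, for the applications in problem $(P)$ only the existence of a finite constant is required, so the weak-type path of the first paragraph is already sufficient and I would present that as the main proof, relegating sharpness to a remark citing \cite{leib}.
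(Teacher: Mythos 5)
The paper does not prove this proposition; it simply quotes the Hardy--Littlewood--Sobolev inequality from Lieb--Loss \cite{leib} and uses it as a black box, so there is no in-paper argument to compare against. Your sketch is a correct and standard route to the non-sharp inequality: $|x|^{-\mu}\in L^{N/\mu,\infty}(\mathbb{R}^N)$, the O'Neil/weak-type Young inequality gives $K\ast h\in L^q$ with $1/q=\mu/N+1/r-1$, and the exponent balance $1/t+\mu/N+1/r=2$ forces $1/t+1/q=1$, after which H\"older closes the estimate. The hypotheses $t,r>1$ and $0<\mu<N$ are exactly what keeps $q$ in $(1,\infty)$ and $N/\mu>1$, so the weak-type Young step is applicable without endpoint issues. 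Your layer-cake/Riesz-rearrangement alternative is also sound and, as you say, makes the necessity of the scaling relation transparent. The one caveat worth flagging is terminological: what the two elementary routes deliver is \emph{some} finite constant $C(t,r,\mu,N)$, which is all the paper uses; the adjective ``sharp'' in the statement refers to the optimal constant, whose identification (via Lieb's competing-symmetries argument and the conformal profile $(1+|x|^2)^{-N/t}$) is a substantially harder theorem. You correctly separate these, and relegating sharpness to the citation is the right call.
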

%We define  
%\begin{align*}
%S= \inf_{u \in X_0\setminus \{ 0\}}   \frac{\int_{\R}\int_{\R} \frac{|u(x)-u(y)|^2}{|x-y|^{N+2s}}~dxdy }{\left( \int_{ \Om} |u|^{2^*_s}\right)^{1/2^*_s_\mu}}
%\end{align*}

\begin{Lemma}\label{lemch3}
	If $V\in L^\infty(\Om)+ L^{N/2s}(\Om)$ then for every $\e>0$ there exists $C_\e$ such that  for every $u \in X_0$, we have 
	\begin{align*}
	\int_{ \Om} V|u|^2~dx \leq \e^2 \|u\|^2+ C_\e \int_{ \Om}|u|^2~dx.
	\end{align*}
\end{Lemma}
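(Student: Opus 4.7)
The plan is to mimic the classical Brezis--Kato argument, adapted to the fractional Sobolev setting via the continuous embedding $X_0 \hookrightarrow L^{2^*_s}(\mathbb R^N)$ encoded in the constant $S_s$ of \eqref{ch27}.

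First, I would exploit the hypothesis $V \in L^{\infty}(\Om)+L^{N/2s}(\Om)$ to write $V = V_1+V_2$ with $V_1 \in L^{\infty}(\Om)$ and $V_2 \in L^{N/2s}(\Om)$. The key trick is that the $L^{N/2s}$-piece can be made to have arbitrarily small norm at the cost of enlarging the $L^{\infty}$-piece. Namely, for $M>0$ set
\begin{equation*}
V_2^M := V_2\,\chi_{\{|V_2|\leq M\}} + V_2\,\chi_{\{|V_2|>M\}} =: W_1^M + W_2^M,
\end{equation*}
so that $W_1^M \in L^{\infty}(\Om)$ with $\|W_1^M\|_{\infty}\leq M$, while by the absolute continuity of the integral (applied to $|V_2|^{N/2s}\in L^1(\Om)$),
$\|W_2^M\|_{L^{N/2s}(\Om)}\to 0$ as $M\to\infty$. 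Hence $V = (V_1+W_1^M) + W_2^M$ where the first summand is bounded and the second has $L^{N/2s}$-norm as small as we wish.

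Next, I would estimate the two resulting integrals separately. For the bounded part, trivially
\begin{equation*}
\int_{\Om} (V_1+W_1^M)|u|^2\,dx \leq (\|V_1\|_{\infty}+M)\int_{\Om}|u|^2\,dx.
\end{equation*}
For the $L^{N/2s}$-piece, apply H\"older's inequality with conjugate exponents $N/2s$ and $N/(N-2s)$, noting that $|u|^2 \in L^{N/(N-2s)}(\Om)$ precisely means $u\in L^{2^*_s}(\Om)$:
\begin{equation*}
\int_{\Om} W_2^M |u|^2\,dx \leq \|W_2^M\|_{L^{N/2s}(\Om)}\, |u|_{2^*_s}^{\,2}\leq S_s^{-1}\,\|W_2^M\|_{L^{N/2s}(\Om)}\,\|u\|^2,
\end{equation*}
the last step being the fractional Sobolev embedding \eqref{ch27}. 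Choosing $M=M(\e)$ large enough that $S_s^{-1}\|W_2^M\|_{L^{N/2s}(\Om)}\leq \e^2$ and setting $C_\e := \|V_1\|_{\infty}+M(\e)$ closes the argument.

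There is no real obstacle here; the only subtlety is recognising that the decomposition $V=V_1+V_2$ is not canonical, so one must perform the truncation step to shift the "large" part of $V_2$ into the $L^{\infty}$-summand. This is precisely what allows $\e$ to be taken arbitrarily small independent of $V$ itself.
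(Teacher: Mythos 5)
Your proof is correct and follows essentially the same route as the paper: decompose $V=V_1+V_2$, truncate the $L^{N/2s}$-part at a level $k$ (your $M$), bound the bounded pieces against $\int_\Om |u|^2$, and bound the small tail via H\"older with exponents $N/2s$ and $N/(N-2s)$ followed by the Sobolev embedding \eqref{ch27}, then choose the truncation level so that the coefficient of $\|u\|^2$ drops below $\e^2$. The only cosmetic difference is that you name the two truncated pieces $W_1^M, W_2^M$ explicitly, whereas the paper splits the integral over $\{|V_2|\le k\}$ and $\{|V_2|>k\}$ directly.
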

\begin{proof}
Let $V=V_1+V_2$ where $V_1 \in L^\infty(\Om)$ and $V_2 \in L^{N/2s}(\Om)$. For each $k>0$ we have 
\begin{align*}
	\int_{ \Om} V|u|^2~dx & \leq  \|V_1\|_{L^\infty(\Om)}\int_{ \Om}|u|^2~dx + k \int_{|V_2|\leq k} |u|^2~dx + \int_{|V_2|> k} |V_2||u|^2~dx\\
	&\leq  \|V_1\|_{L^\infty(\Om)}\int_{ \Om}|u|^2~dx + k \int_{|V_2|\leq k} |u|^2~dx + S^{-1}_s  \left(\int_{|V_2|> k} |V_2|^{N/2s}~dx\right)^{2s/N}\|u\|^2
\end{align*}
where $S_s$ is the best constant of the embedding $X_0$ into $L^{\frac{2N}{N-2s}}.$
For a given $\e>0$, choose $k>0$ such that 
\begin{align*}
S^{-1}_s  \left(\int_{|V_2|> k} |V_2|^{N/2s}~dx\right)^{2s/N} < \e^2.
\end{align*}
It implies that 
\begin{align*}
\int_{ \Om} V|u|^2~dx & \leq   \e^2\|u\|^2+ C_\e \int_{ \Om}|u|^2~dx.
\end{align*}
\QED
\end{proof}

\begin{Lemma}\label{lemch4} \cite[Lemma 3.3]{moroz2}
	Let $p,q,r,t \in [1,\infty)$ and $\la \in [0,2]$ such that 
	\begin{align*}
	1+\frac{N-\mu}{N}-\frac{1}{p} -\frac{1}{t} = \frac{\la}{q}+ \frac{2-\la}{r}.
	\end{align*}
	If $\theta \in (0,2)$ satisfies 
	\begin{align*}
& 	\min\{ q,r\} \left(\frac{N-\mu}{N}-\frac{1}{p}  \right)< \theta < \max\{ q,r\} \left(1-\frac{1}{p}  \right)\\
& \min\{ q,r\} \left(\frac{N-\mu}{N}-\frac{1}{t}  \right)< 2-\theta < \max\{ q,r\} \left(1-\frac{1}{t}  \right)
	\end{align*}
	then for $H \in L^p(\R), K \in L^t(\R)$ and $ u \in L^q(\R) \cap L^r(\R)$,
	\begin{align*}
	\int_{ \R}(|x|^{-\mu}* (H|u|^{\theta}))K|u|^{2-\theta}~ dx \leq  C \|H\|_{L^p(\mathbb R^N)}  \|K\|_{L^t(\mathbb R^N)} 
	\left( \int_{ \R} |u|^q\right)^{\la/q} \left( \int_{ \R} |u|^r\right)^{\frac{(2-\la)}{r}}  .
	\end{align*}
\end{Lemma}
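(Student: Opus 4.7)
The plan is to reduce the estimate to a combination of the Hardy--Littlewood--Sobolev (HLS) inequality, H\"older's inequality, and Riesz--Thorin/log-convexity interpolation for the $|u|$-factors.

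First I would introduce conjugate exponents $a,b \in (1,\infty)$ with $\frac{1}{a} + \frac{\mu}{N} + \frac{1}{b} = 2$, and apply HLS to get
\begin{equation*}
\int_{\R}(|x|^{-\mu}\!\ast\!(H|u|^{\theta}))\,K|u|^{2-\theta}\,dx \;\leq\; C\,\bigl\||H|\,|u|^{\theta}\bigr\|_{L^{a}}\bigl\||K|\,|u|^{2-\theta}\bigr\|_{L^{b}}.
\end{equation*}
Next I would peel off $H$ and $K$ via H\"older: writing $\tfrac{1}{a}=\tfrac{1}{p}+\tfrac{1}{a_1}$ and $\tfrac{1}{b}=\tfrac{1}{t}+\tfrac{1}{b_1}$, we obtain
\begin{equation*}
\bigl\||H|\,|u|^{\theta}\bigr\|_{L^{a}} \leq \|H\|_{L^{p}}\,\bigl\||u|^{\theta}\bigr\|_{L^{a_1}},\qquad \bigl\||K|\,|u|^{2-\theta}\bigr\|_{L^{b}} \leq \|K\|_{L^{t}}\,\bigl\||u|^{2-\theta}\bigr\|_{L^{b_1}}.
\end{equation*}
At this point the remaining task is to control $\||u|^{\theta}\|_{L^{a_1}}\||u|^{2-\theta}\|_{L^{b_1}}$ by a product of the form $\|u\|_{L^{q}}^{\lambda}\|u\|_{L^{r}}^{2-\lambda}$, which is exactly the log-convexity (interpolation) inequality for $L^{p}$-norms. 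Concretely, I would choose a splitting $\lambda = \lambda_1 + \lambda_2$ with $\lambda_1 \in [0,\theta]$, $\lambda_2 \in [0,2-\theta]$, and then interpolate $L^{\theta a_1}$ between $L^{q}$ and $L^{r}$ with weight $\lambda_1/\theta$, and similarly $L^{(2-\theta) b_1}$ with weight $\lambda_2/(2-\theta)$, obtaining
\begin{equation*}
\bigl\||u|^{\theta}\bigr\|_{L^{a_1}}\,\bigl\||u|^{2-\theta}\bigr\|_{L^{b_1}} \leq \|u\|_{L^{q}}^{\lambda}\,\|u\|_{L^{r}}^{2-\lambda}.
\end{equation*}

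The key compatibility to verify is that the exponent balance forced by HLS plus the two H\"older steps,
\begin{equation*}
\frac{1}{a_1}+\frac{1}{b_1} \;=\; 2-\frac{\mu}{N}-\frac{1}{p}-\frac{1}{t} \;=\; 1+\frac{N-\mu}{N}-\frac{1}{p}-\frac{1}{t},
\end{equation*}
coincides with the sum $\frac{\lambda_1+\lambda_2}{q}+\frac{(\theta-\lambda_1)+(2-\theta-\lambda_2)}{r}=\frac{\lambda}{q}+\frac{2-\lambda}{r}$, which is precisely the scaling identity in the hypothesis. This is what makes the choice consistent.

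The main obstacle, and where I would spend most of the care, is showing that the ranges imposed on $\theta$ and $2-\theta$ are exactly what is needed for a feasible choice of $(a_1,b_1,\lambda_1,\lambda_2)$: one must have $a_1,b_1 \geq 1$ (so H\"older is legal) and the interpolation weights $\lambda_1/\theta,\,\lambda_2/(2-\theta)$ must lie in $[0,1]$, together with $\theta a_1 \in [\min(q,r),\max(q,r)]$ and $(2-\theta) b_1 \in [\min(q,r),\max(q,r)]$. Translating these constraints yields bounds on $\theta$ of the form $\min(q,r)(\tfrac{N-\mu}{N}-\tfrac{1}{p}) < \theta < \max(q,r)(1-\tfrac{1}{p})$ and the analogous bounds on $2-\theta$; this matches the hypotheses exactly, so a valid choice of parameters exists. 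Once this is settled, chaining the three inequalities above produces the claimed estimate with a constant depending only on $N,\mu,p,q,r,t,\lambda,\theta$.
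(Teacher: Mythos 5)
The paper does not prove this lemma; it is cited verbatim from Moroz--Van Schaftingen \cite[Lemma 3.3]{moroz2}, and your proposal reconstructs exactly the argument used there: Hardy--Littlewood--Sobolev to split the bilinear form, H\"older to peel off $H$ and $K$, and log-convexity interpolation to control the remaining $|u|$-powers between $L^q$ and $L^r$, with the admissibility ranges for $\theta$ arising from requiring the chained exponents to stay legal. Your strategy and exponent bookkeeping match the reference, so the proposal is correct in approach; the only point to tighten is the feasibility argument, where the coupling through the HLS relation $\frac{1}{a}+\frac{\mu}{N}+\frac{1}{b}=2$ forces you to check not just that each interval for $\frac{1}{a_1}$ and $\frac{1}{b_1}$ is nonempty, but that the line $\frac{1}{a_1}+\frac{1}{b_1}=1+\frac{N-\mu}{N}-\frac{1}{p}-\frac{1}{t}$ actually meets the product of the two admissible intervals, which follows since $\frac{\la}{q}+\frac{2-\la}{r}$ is a convex combination of $\frac{2}{q}$ and $\frac{2}{r}$.
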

\begin{Lemma}\label{lemch1}
	Let  $N\geq 2s ,\; 0<\mu<N$ and $\theta \in (0,2)$. If $H,\; K \in L^{\frac{2N}{N-\mu+2s}}(\R) + L^{\frac{2N}{N-\mu}}(\R) $ and $ 1-\frac{\mu}{N} < \theta <1+ \frac{\mu}{N}$ then for every $\e>0$ there exists $C_{\e,\theta} \in \mathbb{R}$ such that for every $u \in H^s(\R)$,
	\begin{align*}
	\int_{ \R}(|x|^{-\mu}* (H|u|^{\theta}))K|u|^{2-\theta}~ dx \leq \e^2 \left(\int_{ \mathbb{R}^{2N}}\frac{|u(x)-u(y)|^2}{|x-y|^{N+2s}}~dxdy\right)^2 + C_{\e,\theta} \int_{ \R}|u|^2~ dx.
	\end{align*}
\end{Lemma}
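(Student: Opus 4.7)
My plan is to mimic the Brezis--Kato decomposition used in Lemma \ref{lemch3}, but adapted to the nonlocal Hartree-type integrand, with Lemma \ref{lemch4} playing the role of the core bilinear estimate. Set $p_1:=\frac{2N}{N-\mu+2s}$ and $p_2:=\frac{2N}{N-\mu}$, the two exponents appearing in the hypothesis, and note that $p_1\le p_2$. Decompose $H=H_1+H_2$ and $K=K_1+K_2$ with $H_1,K_1\in L^{p_1}(\R^N)$ and $H_2,K_2\in L^{p_2}(\R^N)$. By truncating $H_1$ on $\{|H_1|>M\}$ and shifting the bounded part $H_1\mathbf{1}_{\{|H_1|\le M\}}$ into $H_2$—this shift is legal because the elementary interpolation $\|f\|_{p_2}\le \|f\|_\infty^{1-p_1/p_2}\|f\|_{p_1}^{p_1/p_2}$ shows that $L^{p_1}\cap L^\infty\subset L^{p_2}$ on $\R^N$—I can arrange, for any prescribed $\eta>0$, that $\|H_1\|_{p_1}+\|K_1\|_{p_1}<\eta$, at the cost of enlarging $\|H_2\|_{p_2},\|K_2\|_{p_2}$ by constants depending only on $H,K,\eta$.

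Next I expand bilinearly
\begin{equation*}
\int_{\R^N}(|x|^{-\mu}\!*\!(H|u|^\theta))K|u|^{2-\theta}\,dx=\sum_{i,j=1}^{2}\int_{\R^N}(|x|^{-\mu}\!*\!(H_i|u|^\theta))K_j|u|^{2-\theta}\,dx,
\end{equation*}
and estimate each of the four blocks by Lemma \ref{lemch4} with a block-tailored choice of exponents. For $(i,j)=(1,1)$, take $p=t=p_1$, $q=r=2^*_s$, $\lambda$ arbitrary; this yields a bound by $C\|H_1\|_{p_1}\|K_1\|_{p_1}|u|_{2^*_s}^2$. For $(i,j)=(2,2)$, take $p=t=p_2$, $q=r=2$; this yields a bound by $C\|H_2\|_{p_2}\|K_2\|_{p_2}|u|_2^2$. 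For the mixed blocks take $\{p,t\}=\{p_1,p_2\}$, $q=2$, $r=2^*_s$, $\lambda=1$; a direct computation forces $\lambda=1$ and gives a bound by $C\|H_i\|_{p}\|K_j\|_{t}|u|_2|u|_{2^*_s}$. In each case the identity $1+\tfrac{N-\mu}{N}-\tfrac1p-\tfrac1t=\tfrac{\lambda}{q}+\tfrac{2-\lambda}{r}$ is verified by substitution, and the open-interval constraint on $\theta$ reduces exactly to $1-\mu/N<\theta<1+\mu/N$ in the $(2,2)$ block—this is the binding constraint—while the other blocks impose strictly weaker conditions of the form $\tfrac{N-\mu-2s}{N-2s}<\theta<\tfrac{N+\mu-2s}{N-2s}$ or $\tfrac{N-4s-\mu}{N-2s}<\theta<\tfrac{N+\mu-2s}{N-2s}$, all of which contain $(1-\mu/N,1+\mu/N)$.

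Finally I combine the four blocks with the Sobolev embedding $|u|_{2^*_s}^2\le S_s^{-1}[u]_{H^s}^2$ and Young's inequality $|u|_2|u|_{2^*_s}\le \delta S_s^{-1/2}[u]_{H^s}^2+\frac{1}{4\delta}S_s^{-1/2}|u|_2^2$ applied to the mixed blocks. The $[u]_{H^s}^2$-coefficient collected from all four blocks takes the schematic form $C\|H_1\|_{p_1}\|K_1\|_{p_1}+2C\delta(\|H_1\|_{p_1}\|K_2\|_{p_2}+\|H_2\|_{p_2}\|K_1\|_{p_1})$, which, after first choosing $\delta$ small in terms of $\e$ and then the truncation $M$ large enough to make $\eta$ suitably small given $\delta$ and the now-fixed $\|H_2\|_{p_2},\|K_2\|_{p_2}$, is driven below $\e^2$. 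The residual $|u|_2^2$-coefficient becomes an explicit constant $C_{\e,\theta}$. I expect the main obstacle to be the simultaneous verification of the $\theta$-admissibility conditions across the four blocks—especially the mixed ones, where the computation of $\lambda$ and of the thresholds is the least forgiving—and the correct ordering of the three small/large parameters $\e$, $\delta$ and $M$ so that the truncation step does not blow up the constant controlling $|u|_2^2$.
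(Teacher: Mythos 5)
Your proposal follows essentially the same route as the paper, which itself just cites \cite[Lemma 3.2]{moroz2} and uses the decomposition $H=H_1+H_2$, $K=K_1+K_2$ in $L^{\frac{2N}{N-\mu+2s}}+L^{\frac{2N}{N-\mu}}$, followed by four applications of Lemma~\ref{lemch4}. Your exponent choices are all correct: $p=t=\tfrac{2N}{N-\mu+2s}$, $q=r=2^*_s$ for the rough-rough block; $p=t=\tfrac{2N}{N-\mu}$, $q=r=2$ for the tame-tame block (which indeed reproduces exactly the interval $1-\mu/N<\theta<1+\mu/N$ and is therefore the binding constraint); and $\lambda=1$ forced in the mixed blocks, with the $\theta$-windows $(\tfrac{N-\mu-2s}{N-2s},\tfrac{N+\mu-2s}{N-2s})$ and $(\tfrac{N-4s-\mu}{N-2s},\tfrac{N+\mu}{N-2s})$ strictly containing $(1-\mu/N,1+\mu/N)$ because $s,\mu>0$. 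Your Young-inequality split of the mixed terms is the natural way to pass from $|u|_2|u|_{2^*_s}$ to the two displayed terms. Incidentally, your argument delivers $\e^2[u]_{H^s}^2$ on the right, not $\e^2[u]_{H^s}^4$; the extra square on the seminorm in the paper's statement (and in \eqref{ch2}) is a typo, as the way the lemma is actually invoked in \eqref{ch7} and \eqref{ch15} confirms.

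There is one genuine slip, though a local and fixable one: the order in which you fix the parameters is circular. You propose to choose $\delta$ first, then take the truncation level $M$ large ``given $\delta$ and the now-fixed $\|H_2\|_{p_2},\|K_2\|_{p_2}$.'' But your own construction shoves the bounded part $\tilde H_1\mathbf{1}_{\{|\tilde H_1|\le M\}}$ into $H_2$, so $\|H_2\|_{p_2},\|K_2\|_{p_2}$ are \emph{not} fixed independently of $M$: they grow (like $M^{1-p_1/p_2}$ in the worst case) as $M\to\infty$, while $\|H_1\|_{p_1}\to 0$ at no prescribed rate, so the product $\|H_1\|_{p_1}\|K_2\|_{p_2}$ in the $\delta$-weighted mixed contribution is not controlled as $M\to\infty$ with $\delta$ held fixed. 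The correct ordering is the reverse: first choose $M$ so large that $C\|H_1\|_{p_1}\|K_1\|_{p_1}<\e^2/2$; this pins down all four component norms as honest finite numbers; then choose $\delta$ small enough that $C\delta(\|H_1\|_{p_1}\|K_2\|_{p_2}+\|H_2\|_{p_2}\|K_1\|_{p_1})<\e^2/2$. With that reordering your argument closes; the residual coefficient of $|u|_2^2$, namely $C\|H_2\|_{p_2}\|K_2\|_{p_2}+\tfrac{C}{4\delta}(\|H_1\|_{p_1}\|K_2\|_{p_2}+\|H_2\|_{p_2}\|K_1\|_{p_1})$, is then the constant $C_{\e,\theta}$.
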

\begin{proof}
	We  follow the proof of \cite[Lemma 3.2]{moroz2} in the nonlocal  framework.
	Let $H= H_1+H_2$ and $K= K_1+K_2$ with $H_1,K_1 \in L^{\frac{2N}{N-\mu}}(\R)$ and $H_2, K_2 \in L^{\frac{2N}{N-\mu+2s}}(\R)$. Now using Lemma \ref{lemch4} iteratively with appropriate values of $p,q,r,t, \theta$ and $\la$ (See \cite[Lemma 3.2]{moroz2}),  we have 
\begin{align*}
\int_{ \R}(|x|^{-\mu}* (H|u|^{\theta}))K|u|^{2-\theta}~ dx & \leq C   \left(  |H_2|_{\frac{2N}{N-\mu+2s}} +    |K_2|_{\frac{2N}{N-\mu+2s}} \right)^2 \left(\int_{ \mathbb{R}^{2N}}\frac{|u(x)-u(y)|^2}{|x-y|^{N+2s}}~dxdy\right)^2\\& \quad + C   \left(  |H_1|_{\frac{2N}{N-\mu}} +    |K_1|_{\frac{2N}{N-\mu}} \right)^2 \int_{ \R}|u|^2~ dx. 
\end{align*}
For given $\e>0$, choose $H_2, K_2$ such  that 
\begin{align*}
|H_2|_{\frac{2N}{N-\mu+2s}} ,    |K_2|_{\frac{2N}{N-\mu+2s}} < \frac{\e}{2\sqrt{C}}. 
\end{align*}
Therefore, the result holds. \QED
\end{proof}

\begin{Lemma}\label{lemch2}
	For $a,b \in \mathbb R, r\geq 2 , k\geq 0 $, we have
	\begin{align*}
\frac{4(r-1)}{r^2} \left(  |a_k|^{r/2} -|b_k|^{r/2}\right)^2 \leq 	(a-b)(a_k|a_k|^{r-2}-b_k|b_k|^{r-2}) 
	\end{align*}
	where \begin{align*}
	a_{k} =  \max\{-k , \min\{  a,k\} \}=\left\{
	\begin{array}{ll}
	-k ,  &  \text{ if }  a\leq -k, \\
	a, & \text{ if } -k< a<k,\\
	k ,  &  \text{ if }  a\geq k. \\
	\end{array} 
	\right.
	\end{align*}
	\end{Lemma}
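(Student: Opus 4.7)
The plan is to reduce the estimate to the classical convexity/monotonicity inequality applied to the truncated values, exploiting the fact that the truncation map $t\mapsto t_k$ is nondecreasing and $1$-Lipschitz.

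First I would observe that by the symmetry of the inequality in $(a,b)$, we may assume $a\ge b$. Since $t\mapsto t_k$ is nondecreasing, this gives $a_k\ge b_k$, and since $r\ge 2$ the map $t\mapsto t|t|^{r-2}$ is nondecreasing, hence $a_k|a_k|^{r-2}-b_k|b_k|^{r-2}\ge 0$. The truncation is moreover $1$-Lipschitz, so $0\le a_k-b_k\le a-b$. Combining these two facts yields
\begin{equation*}
(a-b)\bigl(a_k|a_k|^{r-2}-b_k|b_k|^{r-2}\bigr)\ \ge\ (a_k-b_k)\bigl(a_k|a_k|^{r-2}-b_k|b_k|^{r-2}\bigr).
\end{equation*}
This reduces the problem to proving the untruncated inequality with $c=a_k$ and $d=b_k$.

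Next, I would prove the pointwise estimate
\begin{equation*}
(c-d)\bigl(c|c|^{r-2}-d|d|^{r-2}\bigr)\ \ge\ \frac{4(r-1)}{r^2}\bigl(c|c|^{r/2-1}-d|d|^{r/2-1}\bigr)^{2}
\end{equation*}
for every $c,d\in\mathbb{R}$ with $c\ge d$. Writing $c|c|^{r-2}-d|d|^{r-2}=(r-1)\int_{d}^{c}|s|^{r-2}\,ds$ and applying the Cauchy--Schwarz inequality to the factorization $|s|^{r-2}=(|s|^{(r-2)/2})^{2}$, one obtains
\begin{equation*}
(c-d)\int_{d}^{c}(r-1)|s|^{r-2}\,ds\ \ge\ (r-1)\left(\int_{d}^{c}|s|^{(r-2)/2}\,ds\right)^{2}=\frac{4(r-1)}{r^2}\bigl(c|c|^{r/2-1}-d|d|^{r/2-1}\bigr)^{2},
\end{equation*}
where in the last equality I used that $\frac{d}{ds}\bigl[\frac{2}{r}\,s|s|^{r/2-1}\bigr]=|s|^{(r-2)/2}$.

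Finally I would replace the signed power expression by the absolute one via
\begin{equation*}
\bigl(c|c|^{r/2-1}-d|d|^{r/2-1}\bigr)^{2}\ \ge\ \bigl(|c|^{r/2}-|d|^{r/2}\bigr)^{2},
\end{equation*}
which is immediate when $c$ and $d$ share a sign (equality holds) and, in the opposite-sign case, reduces to $(\alpha+\beta)^{2}\ge(\alpha-\beta)^{2}$ for $\alpha,\beta\ge 0$. Applying this with $c=a_k$ and $d=b_k$ and chaining with the first reduction gives the claimed inequality. There is no real obstacle; the only point requiring a little care is the sign-case analysis in the last step, which is elementary.
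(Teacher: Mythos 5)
Your proof is correct, and it is slightly more self-contained than the paper's. Both arguments share the same reduction scheme: use the $1$-Lipschitz monotonicity of the truncation $t\mapsto t_k$ together with the monotonicity of $t\mapsto t|t|^{r-2}$ to replace $a-b$ by $a_k-b_k$, and then compare $\bigl(|a_k|^{r/2}-|b_k|^{r/2}\bigr)^2$ with $\bigl(a_k|a_k|^{r/2-1}-b_k|b_k|^{r/2-1}\bigr)^2$ by a sign-case check. The difference is in how the core numerical inequality
\begin{equation*}
(c-d)\bigl(c|c|^{r-2}-d|d|^{r-2}\bigr)\ \ge\ \frac{4(r-1)}{r^2}\bigl(c|c|^{r/2-1}-d|d|^{r/2-1}\bigr)^{2}
\end{equation*}
is obtained: the paper simply invokes Lemma~3.1 of Iannizzotto--Mosconi--Squassina (already stated there in a form involving $a,b,a_k,b_k$) and specializes it to $a=a_k$, $b=b_k$, whereas you reprove it directly from the fundamental theorem of calculus plus Cauchy--Schwarz on the representation $c|c|^{r-2}-d|d|^{r-2}=(r-1)\int_d^c|s|^{r-2}\,ds$. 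Your route avoids the external citation at the cost of a short computation, and it makes the provenance of the constant $\tfrac{4(r-1)}{r^2}$ transparent. One minor cosmetic point worth being explicit about: the symmetry $a\leftrightarrow b$ handles the reduction to $a\ge b$, and the remaining sign-case analysis in your last step already covers $0\ge c\ge d$ (with equality) as well as the mixed-sign case, so the argument is complete; the paper's own case list is written a bit more tersely and would benefit from the same clarity.
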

\begin{proof}
	From \cite[Lemma 3.1]{squassina}, we have 	
	\begin{align}\label{ch1}
	\frac{4(r-1)}{r^2} \left(  a|a_k|^{\frac{r}{2}-1} -b|b_k|^{\frac{r}{2}-1} \right)^2 \leq 	(a-b)(a_k|a_k|^{r-2}-b_k|b_k|^{r-2}). 
	\end{align}
	By symmetry of  the inequality, it is enough to show that result hold for $a\leq b$. For this, let  $a= a_k $ and $b=b_k$ in \eqref{ch1}, we have 
	\begin{align*}
	\frac{4(r-1)}{r^2} \left(  a_k|a_k|^{\frac{r}{2}-1} -b_k|b_k|^{\frac{r}{2}-1} \right)^2 \leq 	(a_k-b_k)(a_k|a_k|^{r-2}-b_k|b_k|^{r-2}). 
	\end{align*}
	\begin{enumerate}
		\item[Case 1:] $0\leq b<a$\\
	Clearly $0\leq b_k< a_k$ and $a_k-b_k \leq a-b$. This implies \[(a_k-b_k)(a_k|a_k|^{r-2}-b_k|b_k|^{r-2}) \leq (a-b)(a_k|a_k|^{r-2}-b_k|b_k|^{r-2}).\]
	\item[Case 2:] $b\leq 0 \leq a$\\
	Again notice that $b_k\leq 0\leq a_k , a_k-b_k \leq a-b $ and $ a_kb_k \leq |a_kb_k|$ we have \[(a_k-b_k)(a_k|a_k|^{r-2}-b_k|b_k|^{r-2}) \leq (a-b)(a_k|a_k|^{r-2}-b_k|b_k|^{r-2})\] and \[\left(  |a_k|^{r/2} -|b_k|^{r/2}\right)^2 \leq 
	\left(  a_k|a_k|^{\frac{r}{2}-1} -b_k|b_k|^{\frac{r}{2}-1} \right)^2.\] Hence the proof. \QED
		\end{enumerate}
\end{proof}
\section{Technical results}
This section is devoted to the study of weak solutions to the following problem
\begin{equation*}
(P_1)\;
\left\{\begin{array}{rllll}
(-\De)^s u
&=g(x,u)+ \left(\ds \int_{\Om}\frac{H(y)u(y)}{|x-y|^{\mu}}dy\right) K(x) \; \text{in}\;
\Om,\\
u&=0 \; \text{ in } \R \setminus \Om,
\end{array}
\right.
\end{equation*}
where $H, K \in L^{\frac{2N}{N-\mu+2s}}(\Om) + L^{\frac{2N}{N-\mu}}(\Om) $.  Here we use the results, established in last section to improve the integrability regularity of weak solutions to the above mentioned problem. 

\begin{Proposition}\label{Propch1}
	Let $H, K \in L^{\frac{2N}{N-\mu+2s}}(\Om) + L^{\frac{2N}{N-\mu}}(\Om) $. Let 
	 $g: \overline{\Om} \times \mathbb{R} \ra \mathbb{R}$ be  a continuous   function satisfying
	\begin{align*}
	g(x, u) =		O(|u|^{2^*_s-1}), & \text{ if } |u|\ra \infty
	\end{align*}
	uniformly for all $ x \in \overline{\Om}$. Then any solution $u \in X_0$ of the problem $(P_1)$	belongs to $L^r(\Om)$ where $r \in [2, \frac{2N^2}{(N-\mu)(N-2s)})$. 
\end{Proposition}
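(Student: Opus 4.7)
The plan is to execute a nonlocal Brezis-Kato bootstrap tailored to the linear doubly nonlocal equation $(P_1)$. First rewrite $(P_1)$ as
\[
(-\De)^s u \;=\; V(x)u+\bigl(|x|^{-\mu}*(Hu)\bigr)K(x),
\]
with $V(x):=g(x,u(x))/u(x)$ on $\{u\neq 0\}$ and $V(x):=0$ elsewhere. The hypothesis $g(x,u)=O(|u|^{2^*_s-1})$ together with $u\in L^{2^*_s}(\Om)$ (fractional Sobolev embedding) force $|V|\leq C(1+|u|^{2^*_s-2})\in L^{\infty}(\Om)+L^{N/(2s)}(\Om)$, so Lemma \ref{lemch3} applies to the $Vu$ part. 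Since $H,K\in L^{\frac{2N}{N-\mu+2s}}(\Om)+L^{\frac{2N}{N-\mu}}(\Om)$, Lemma \ref{lemch1} applies to the Hartree term.

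For the bootstrap step, fix $r\in[2,\frac{2N}{N-\mu})$ and assume inductively that $u\in L^{r}(\Om)$ (which holds by Sobolev embedding and interpolation whenever $r\leq 2^*_s$, and is upgraded by successive iterations otherwise). For $k>0$ set $u_k:=\max\{-k,\min\{u,k\}\}\in X_{0}\cap L^\infty(\Om)$ and test the weak formulation of $(P_1)$ with $\varphi:=u_k|u_k|^{r-2}\in X_0$. Lemma \ref{lemch2} yields
\[
\ld u,\varphi\rd \;\geq\; \frac{4(r-1)}{r^2}\,\bigl[|u_k|^{r/2}\bigr]^{2}_{H^s(\R)}.
\]
On the right-hand side, decomposing $u=u_k+(u-u_k)$, the semilinear integral $\int_\Om Vu\varphi\,dx$ reduces, up to an error vanishing as $k\to\infty$ by dominated convergence, to $\int_\Om V|u_k|^r\,dx$; Lemma \ref{lemch3} applied to $|u_k|^{r/2}\in X_0$ bounds this by $\e^2\bigl[|u_k|^{r/2}\bigr]^{2}_{H^s(\R)}+C_\e\||u_k|^{r/2}\|_{L^2(\Om)}^{2}$. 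The Choquard integral $\int_\Om(|x|^{-\mu}*(Hu))K\varphi\,dx$ is similarly recast, after the substitution $v:=|u_k|^{r/2}$ and the identification $\theta:=2/r$, as $\int_\Om(|x|^{-\mu}*(Hv^\theta))Kv^{2-\theta}\,dx$ modulo vanishing error terms. The admissibility condition $\theta\in(1-\mu/N,1+\mu/N)$ required by Lemma \ref{lemch1} is equivalent to $r<\frac{2N}{N-\mu}$, which holds by the choice of $r$, and Lemma \ref{lemch1} produces the bound $\e^2\bigl[|u_k|^{r/2}\bigr]^{2}_{H^s(\R)}+C_\e\||u_k|^{r/2}\|_{L^2(\Om)}^{2}$.

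Choosing $\e$ small enough to absorb the seminorm terms into the LHS produces a uniform-in-$k$ estimate for $\bigl[|u_k|^{r/2}\bigr]_{H^s(\R)}$. Sending $k\to\infty$ via Fatou gives $|u|^{r/2}\in H^s(\R)$, hence $u\in L^{rN/(N-2s)}(\Om)$ by the fractional Sobolev embedding. Iterating this upgrade (finitely many steps suffice, each gaining a factor $N/(N-2s)$) and interpolating against $L^{2}(\Om)$ delivers $u\in L^{\rho}(\Om)$ for every $\rho\in[2,\frac{2N^2}{(N-\mu)(N-2s)})$.

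The main obstacle is the Choquard term: the mismatch between the full $u$ inside the Hartree convolution and the truncated $u_k|u_k|^{r-2}$ in the test function generates cross terms that do not directly fit the symmetric template of Lemma \ref{lemch1}. Controlling them requires a careful decomposition and a further appeal to Lemma \ref{lemch4} (or a dominated-convergence reduction) to show they are either absorbed by $\e^2[|u_k|^{r/2}]^2_{H^s(\R)}$ or vanish as $k\to\infty$. The admissibility constraint $\theta=2/r\in(1-\mu/N,1+\mu/N)$ of Lemma \ref{lemch1} is precisely what caps the bootstrap and pins down the sharp endpoint $\frac{2N^2}{(N-\mu)(N-2s)}$ in the statement.
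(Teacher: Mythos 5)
Your overall strategy---rewrite $(P_1)$ as a linear equation, truncate the test function, bound from below via Lemma~\ref{lemch2} and from above via Lemmas~\ref{lemch1} and~\ref{lemch3}, absorb and iterate---is the right skeleton and matches the paper's. However, there are two gaps, one of them structural.

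First, a minor one: setting $V := g(x,u)/u$ on $\{u\neq0\}$ does \emph{not} yield $|V|\le C(1+|u|^{2^*_s-2})$ unless $g(x,0)=0$; the growth hypothesis on $g$ is only at infinity, so $g(x,u)/u$ may blow up as $u\to 0$. The paper splits $g(x,u)$ using a cutoff $\eta$ supported on $\{|u|<L\}$, defining $V := (1-\eta)\,g(x,u)/u$ (supported on $\{|u|>L/2\}$, hence controlled by the growth at infinity) and pushing the bounded remainder $T := \eta\, g(x,u)+\alpha u$ to the right-hand side as an element of $X_0'$. This separation is needed for your claim $V\in L^{\infty}+L^{N/2s}$ to hold.

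The structural gap is exactly the cross-term issue you flag at the end but then leave unresolved. After testing with $\varphi = u_k|u_k|^{r-2}$ and decomposing $u = u_k + (u-u_k)\chi_{\{|u|\ge k\}}$ inside the convolution, the ``mismatch'' term has the shape
\[
\int_{\Om}\Bigl(|x|^{-\mu}*\bigl(|H|\,|u|\,\chi_{\{|u|\ge k\}}\bigr)\Bigr)\,|K|\,|u_k|^{r-1}\,dx .
\]
To make this vanish as $k\to\infty$, the paper uses Hardy-Littlewood-Sobolev with exponents $j,l$ satisfying $1/j = 1+\tfrac{N-\mu}{2N}-\tfrac1r$, $1/l = \tfrac{N-\mu}{2N}+\tfrac1r$, and this requires $Hu\in L^{l}$. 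That is guaranteed only when $H\in L^{2N/(N-\mu)}$; with $H\in L^{2N/(N-\mu+2s)}$ one gets $Hu\in L^{m}$ with $m<l$, and on a bounded domain $L^m\supsetneq L^{l}$, so the estimate does \emph{not} close. No choice of HLS exponents repairs this (one checks that the required inequality reduces to $-4s/(2N)\ge 0$), and dominated convergence cannot be invoked because the integrand may fail to be in $L^1$ at any $k$. A further appeal to Lemma~\ref{lemch4}, as you suggest, does not help either: the convolved factor in the cross term is $|u|\chi_{\{|u|\ge k\}}$, not a power of the truncation $|u_k|^{r/2}$, so it does not have the template $H|u|^\theta$.

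The paper's resolution, which your argument omits, is precisely the Brezis-Kato regularization: approximate $H, K$ by $H_n, K_n\in L^{2N/(N-\mu)}$ with $|H_n|\le|H|$, $|K_n|\le|K|$, truncate $V$ to $V_n$, build the coercive bilinear forms $B_n$, and invoke Lax-Milgram to produce $u_n\rightharpoonup u$. The a~priori estimates are performed on $u_n$ (where the cross terms do vanish as $\tau\to\infty$ because $H_n,K_n$ are genuinely in $L^{2N/(N-\mu)}$), giving $n$-uniform bounds that pass to the limit. Without this intermediate approximation, the iteration you outline cannot be justified at the critical exponents $2N/(N-\mu+2s)$.
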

\begin{proof} 
For $\theta=1 $ in Lemma \ref{lemch1}, there exists $\al >0$ such that for every $ \phi \in X_0$, 
\begin{align}\label{ch2}
\int_{ \Om} \int_{\Om}\frac{|H(y)\phi(y)K(x)\phi(x)|}{|x-y|^{\mu}}~dxdy \leq \frac12 \left(\int_{ Q}\frac{|\phi(x)-\phi(y)|^2}{|x-y|^{N+2s}}~dxdy\right)^2 + \frac{\al}{2} \int_{ \Om}|\phi|^2~ dx.
\end{align}
	If $3\leq 2^*_s\leq 2^*_s$ then $|u|^{2^*_s-2} \in L^{N/2s}(\Om)$. If $2<2^*_s<3$ then choose $p>1$ such that $1\leq \frac{p(2^*_s-2)N}{2s}\leq 2^*_s$ then using H\"older's inequality gives us 
	\begin{align*}
	\left( \int_{\Om} |u|^{\frac{(2^*_s-2)N}{2s}}~dx\right)^{2s/N} \leq C \left( \int_{ \Om} |u|^{\frac{p(2^*_s-2)N}{2s}}~dx\right)^{2s/Np}<\infty.
	\end{align*}
 Choose $L_1>0$  such that $ 	\left( \int_{|u|>L_1} |u|^{\frac{(2^*_s-2)N}{2s}}~dx\right)^{2s/N} \leq \frac{S_s}{2}$ where $S_s$ is the best Sobolev constant defined in \eqref{ch27}.  
	Since $g(x,u)=  O(|u|^{2^*_s-1})$ for $u$ large enough, there exist $L/2> L_1>0$ such that $ g(x,u)\leq |u|^{2^*_s-1}$ uniformly for $x \in \overline{\Om}$ and $|u|>L/2$. 
Define $\eta \in C_c^\infty[0,\infty)$ such that $0\leq \eta\leq 1$ and 
\begin{align*}
\eta(u) =  \left\{
\begin{array}{ll}
1 ,  &  \text{ if }  |u|< L/2, \\
0, & \text{ if } |u|>L.
\end{array} 
\right.
\end{align*}
Define $V:= (1-\eta) \frac{g(x,u)}{u}$ and $T := \eta g(x,u)+   \al u $. By the choice of $\eta$,  we obtain
\begin{align}\label{ch28}
|V|_{N/2s}< S_s/2 \text{ and }  T \in X_0^\prime. 
\end{align}
Observe  that  $u$ is the unique solution to the following problem 
\begin{equation*}
(-\De)^s u + \al u 
=Vu+ \left(\ds \int_{\Om}\frac{H(y)u(y)}{|x-y|^{\mu}}dy\right) K   +T  \; \text{in}\;
\Om,u=0 \; \text{ in } \R \setminus \Om.
\end{equation*}
Choose sequence $ \{  H_n \}_{n\in \N}$ and $\{  K_n \}_{n\in \N}$ in $L^{\frac{2N}{N-\mu}}(\Om)$ such that $|H_n |\leq |H|,\; |K_n|\leq |K| $ and $H_n \ra H $, $K_n \ra K $ a.e in $\Om$. 
For each $ n\in \N$, $V_n$ denotes the truncated potential defined as $V_n= V$ if $|V|\leq n$ and $V_n = n $ if $|V|>n$.  Now we introduce the bilinear form 
\begin{align*}
B_n(v, w )= & \int_{ Q} \frac{(v(x)-v(y))(w(x)-w(y))}{|x-y|^{N+2s}}~dxdy +\al \int_{ \Om} vw~dx \\&  \quad  - \int_{ \Om} \int_{\Om}\frac{H_n(y)v(y)K_n(x)w(x)}{|x-y|^{\mu}}~dxdy- \int_{ \Om}V_n vw~dx.
\end{align*}
	In view of H\"older's inequality, Sobolev embedding, \eqref{ch28}  and  \eqref{ch2}, one can easily conclude that $ B_n$ is continuous coercive bilinear form.  Hence by Lax-Miligram Lemma (See \cite[Corollary 5.8]{brezis}) there exists a   unique  $u_n \in X_0$  such that for all $w \in X_0$ we have 
\begin{align}\label{ch3}
B_n(u_n, w)= \int_{ \Om}T w~dx .
\end{align}
 Subsequently,  $u_n$ is a unique solution to  the problem
\begin{equation}\label{ch12}
(-\De)^s u_n + \al u_n 
= \left(\ds \int_{\Om}\frac{H_n(y)u_n(y)}{|x-y|^{\mu}}dy\right) K_n +V_nu_n +T  \; \text{in}\;
\Om,	u_n=0 \; \text{ in } \R \setminus \Om.
\end{equation}
Furthermore, using \eqref{ch3} we can easily prove that $u_n $ is a bounded sequence in $X_0$. It implies that up to a subsequence,  $u_n \rp u$ weakly in $X_0$. Let     $u_{n, \tau}=  \max\{-\tau , \min\{  u_n,\tau \} \} $ for  $ \tau>0$ and $x \in \Om$.  Testing  Problem \eqref{ch12} with  $ \phi = |u_{n, \tau}|^{r-2} u_{n, \tau} \in X_0$ ($2\leq r< \frac{2N}{N-\mu}$), with the help of Lemma \ref{lemch2}, we get  
\begin{equation}\label{ch6}
\begin{aligned}
&	\frac{4(r-1)}{r^2} \| |u_{n, \tau}|^{r/2}\|^2 + \al \int_{ \Om} || u_{n, \tau}|^{r/2}|~dx \\
& \leq \int_{Q} \frac{(u_n(x)-u_n(y))(\phi(x)-\phi(y))}{|x-y|^{N+2s}}~dxdy + \al \int_{ \Om} u_n \phi~dx \\
& =  \int_{\Om} \int_{ \Om} \frac{H_n(y)u_n(y)K_n(x) |u_{n, \tau}|^{r-2} u_{n, \tau} }{|x-y|^{\mu}}dy  + \int_{ \Om} V_n u_n   |u_{n, \tau}|^{r-2} u_{n, \tau}~dx + \int_{\Om}T |u_{n, \tau}|^{r-2} u_{n, \tau}~dx. 
\end{aligned}
\end{equation}
Using Lemma \ref{lemch1} with $\e^2 =\frac{(r-1)}{r^2}$, we obtain
\begin{equation}\label{ch7}
\begin{aligned}
\int_{\Om} \int_{ \Om} \frac{H_n(y)u_n(y)K_n(x) |u_{n, \tau}|^{r-2} u_{n, \tau} }{|x-y|^{\mu}}dxdy &  \leq \int_{\Om} \int_{ \Om} \frac{|H_n(y)u_{n,\tau}(y)||K_n(x)| |u_{n, \tau}(x)|^{r-1} }{|x-y|^{\mu}}dxdy\\
&  + \int_{E_{n,\tau}} \int_{ \Om} \frac{|H_n(y)u_{n}(y)||K_n(x)| |u_{n}(x)|^{r-1}}{|x-y|^{\mu}}dxdy  \\
&\leq  \frac{(r-1)}{r^2} \||u_{n \tau}|^{r/2}\|^2 + C_r \int_{ \Om}|u_{n, \tau}|^{r}~dx \\
& + \int_{E_{n,\tau}} \int_{ \Om} \frac{|H_n(y)u_{n}(y)||K_n(x)||u_{n}(x)|^{r-1}}{|x-y|^{\mu}}dxdy
\end{aligned}
\end{equation}
where $E_{n,\tau}= \{ x \in \mathbb{R}^N : |u_n(x)|\geq \tau \}$. 	By Hardy-Littlewood-Sobolev inequality and H\"older's inequality, we have 
\begin{align}\label{ch5}
\int_{E_{n,\tau}} \int_{ \Om} \frac{|H_n(y)u_{n}(y)||K_n(x)||u_{n}(x)|^{r-1}}{|x-y|^{\mu}}dxdy\leq C \left( \int_{ \R} \bigg||K_n||u_n|^{r-1}\bigg|^j~ d \xi\right)^{\frac{1}{j}}\left( \int_{E_{n,\tau}} |H_nu_n|^l~ d \xi\right)^{\frac{1}{l}}
\end{align}
where $j$ and $l$ satisfy the relation   $\frac{1}{j}= 1+ \frac{N-\mu}{2N}-\frac{1}{r}$ and $\frac{1}{l}=  \frac{N-\mu}{2N}+\frac{1}{r}$. Using the fact that $H_n, K_n \in L^{\frac{2N}{N-\mu}}(\Om)$ and  again the  H\"older's inequality,  $u_n \in L^r(\R)$ implies that  $|K_n||u_n|^{r-1} \in L^j(\R)$ and $ |H_nu_n| \in L^l(\R)$. Therefore, as $\tau\ra \infty$, \eqref{ch5} gives
\begin{align}\label{ch8}
\lim_{\tau \ra \infty}\int_{E_{n,\tau}} \int_{ \Om} \frac{|H_n(y)u_{n}(y)||K_n(x)||u_{n}(x)|^{r-1}}{|x-y|^{\mu}}dxdy =0.
\end{align}
Using the Sobolev inequality, \eqref{ch6}, \eqref{ch7} and \eqref{ch8}, we have 
 \begin{equation}\label{ch13}
 \begin{aligned}
 \frac{3(r-1)S_s}{r^2}&  \left( \int_{ \Om} |u_{n,\tau}|^{\frac{rN}{(N-2s)}}~dx\right)^{\frac{N-2s}{N}}\\
 & \leq  C_r \int_{ \Om}|u_{n}|^{r} + \int_{E_{n,\tau}} \int_{ \Om} \frac{|H_n(y)u_{n}(y)||K_n(x)||u_{n}(x)|^{r-1}}{|x-y|^{\mu}}dxdy \\
 & \quad   + \int_{ \Om} V_n u_n   |u_{n, \tau}|^{r-2} u_{n, \tau}~dx + \int_{\Om}g |u_{n, \tau}|^{r-2} u_{n, \tau}~dx. 
 \end{aligned}
 \end{equation}
Employing the fact that $g$ is a Carath\'eodory function,
 \begin{equation}\label{ch14}
 \begin{aligned}
  \int_{\Om}T |u_{n, \tau}|^{r-2} u_{n, \tau}~dx & \leq  \int_{|u|\leq L }g(x,u) |u_{n}|^{r-1} ~dx + \al \int_{\Om} u  |u_{n, \tau}|^{r-1} ~dx \\
  & \leq C(L_1) \left(  \int_{\Om}  |u|^{r} ~dx  + \int_{\Om}   |u_{n}|^{r} ~dx\right).
 \end{aligned}
 \end{equation}
 By Lemma \ref{lemch3} for $\e^2 = \frac{r-1}{r^2}$, we have 
 \begin{equation}\label{ch15}
 \begin{aligned}
 \int_{ \Om} V_n u_n   |u_{n, \tau}|^{r-2} u_{n, \tau}~dx & \leq 2 \int_{E_{n,\tau}} V_n   |u_{n}|^{r} ~dx + \int_{ \Om} V_n    |u_{n, \tau}|^{r} ~dx \\
 & \leq \frac{(r-1)}{r^2} \||u_{n \tau}|^{r/2}\|^2 + C_r \int_{ \Om}|u_{n, \tau}|^{r}~dx + 2 \int_{E_{n,\tau}} V_n   |u_{n}|^{r} ~dx. 
 \end{aligned}
 \end{equation}
 Using Dominated Convergence theorem, one can easily shows that $\ds \lim_{\tau \ra \infty} \int_{E_{n,\tau}} V_n   |u_{n}|^{r} ~dx=0$. Now taking into account \eqref{ch13}, \eqref{ch14}, \eqref{ch15} and  letting $\tau \ra \infty$, we have 
 \begin{equation*}
 \begin{aligned}
  \left( \int_{ \Om} |u_{n}|^{\frac{rN}{(N-2s)}}~dx\right)^{\frac{N-2s}{N}}
 & \leq C_r \left( \int_{\Om}|u_{n}|^r~dx + \int_{\Om}|u|^r~dx \right). 
 \end{aligned}
 \end{equation*}
 Therefore, 
 	\begin{align*}
 \limsup_{n \ra \infty} \left( \int_{ \Om} |u_{n}|^{\frac{rN}{(N-2s)}}~dx\right)^{\frac{N-2s}{N}} \leq C_r \limsup_{n \ra \infty}  \left( \int_{\Om}|u_{n}|^r~dx + \int_{\Om}|u|^r~dx \right). 
 \end{align*}
Hence, by iterating a  finite number of times, we infer that  $ u \in L^q(\Om)$ for all  $ q\in \left[2, \frac{2N^2}{(N-\mu)(N-2s)}\right) $. Moreover, there exists a  positive constant $C(q,N,\mu, |\Om|)$ such that $|u|_q\leq C(q,N,\mu, |\Om|) |u|_{2^*_s}$. 
 \QED 
\end{proof}
\begin{Definition} 
	For $\phi \in C^0(\overline{\Om})$ with $\phi >0$ in $\Om$, the set $C_\phi(\Om)$ is defined as 
	\begin{align*}
	C_\phi(\Om)= \{ u \in C^0(\overline{\Om})\; :\; \text{there exists } c \geq 0 \text{ such that } |u(x)|\leq c\phi(x), \text{ for all } x \in \Om   \},
	\end{align*}
	endowed with the natural norm $\bigg\|\ds \frac{u}{\phi}\bigg\| _{L^{\infty}(\Om)}$.
\end{Definition}
\begin{Definition}
	The positive cone of $C_\phi(\Om)$ is the open convex subset of $C_\phi(\Om)$ defined as 
	\begin{align*}
	C_\phi^+(\Om)= \left\{ u \in C_\phi(\Om)\; :\; \inf_{x \in \Om} \frac{u(x)}{\phi(x)}>0 \right\}.
	\end{align*}
\end{Definition}

\begin{Proposition}\label{propch1}
	\cite[Theorem 1.2]{adi} Let $\phi_1 \in C^s(\R) \cap C^+_{d^s}(\Om)$ be the normalized eigenvalue of $(-\De)^s$ in $X_0$.   If $q \in (0,1)$ then there exists a unique positive  $ \underline{u} \in  X_0 \cap C^+_{\phi_1}(\Om) \cap C_0(\overline{\Om}) $ classical solution to  the following problem
	\begin{equation}\label{ch25}
	(-\De)^s u
	=u^{q-1},\; u>0   \; \text{in}\;
	\Om,\; 
	u=0 \; \text{ in } \R \setminus \Om.
	\end{equation}
\end{Proposition}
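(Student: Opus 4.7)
My strategy is to obtain $\underline{u}$ as a nontrivial nonnegative minimizer of a variational functional, upgrade the regularity and establish the sharp two-sided control $\underline{u}\asymp\phi_1$ up to the boundary via a sub/supersolution approximation, and deduce uniqueness from the sublinearity $q<1$. Concretely I would consider
\[
E(u)=\frac{1}{2}\|u\|^{2}-\frac{1}{q}\int_\Omega (u^+)^{q}\,dx,\qquad u\in X_0.
\]
Because $q<2$, H\"older's inequality and the continuous embedding $X_0\hookrightarrow L^q(\Omega)$ give coercivity of $E$, while weak lower semicontinuity follows from the weak lower semicontinuity of the norm combined with the compactness of $X_0\hookrightarrow L^q(\Omega)$. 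Hence $E$ attains its infimum at some $\underline{u}\in X_0$, and evaluating at $t\phi_1$ with $t\downarrow 0^{+}$ shows $E(\underline{u})<0$, so $\underline{u}\not\equiv 0$. Replacing $\underline{u}$ by $|\underline{u}|$ we may take $\underline{u}\ge 0$, and the fractional strong maximum principle then forces $\underline{u}>0$ in $\Omega$. Testing minimality along $\underline{u}+t\phi$ for $\phi\in C^{\infty}_{c}(\Omega)$ and letting $t\downarrow 0^{+}$ yields $(-\Delta)^{s}\underline{u}=\underline{u}^{q-1}$ in the weak sense (for test functions compactly supported in $\Omega$, which is all that the singular right-hand side allows a priori).

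The main obstacle is the boundary estimate $\underline{u}\in C^{+}_{\phi_1}(\Omega)$. For this I would regularize to
\[
(-\Delta)^{s}u_\varepsilon=(u_\varepsilon+\varepsilon)^{q-1}\ \text{ in }\Omega,\qquad u_\varepsilon=0\ \text{ in }\R\setminus\Omega,
\]
which admits a unique positive solution $u_\varepsilon\in X_0\cap L^{\infty}(\Omega)$ by strict convexity of the associated functional, with $u_\varepsilon\in C^{s}(\overline\Omega)$ by the Ros-Oton--Serra boundary regularity. Choosing $c>0$ small enough that $c\lambda_1\phi_1\le (c\phi_1+\varepsilon)^{q-1}$ on $\Omega$ exhibits $c\phi_1$ as a subsolution, hence $u_\varepsilon\ge c\phi_1$; conversely, the unique solution $\bar v$ of $(-\Delta)^{s}\bar v=(c\phi_1)^{q-1}$ lies in $C^{+}_{d^{s}}(\overline\Omega)$ by the Green-function estimates of Ros-Oton--Serra combined with the integrability $(c\phi_1)^{q-1}\sim d^{s(q-1)}\in L^{p}(\Omega)$ for $p<1/(s(1-q))$, and the comparison principle yields $u_\varepsilon\le \bar v\le C\phi_1$. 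Passing to the limit $\varepsilon\downarrow 0$ along the monotone family $\{u_\varepsilon\}$ then identifies the limit with $\underline{u}$ in $X_0\cap C^{0}_{d}(\overline\Omega)$ and produces $\underline{u}\in C^{+}_{\phi_1}(\Omega)\cap C_{0}(\overline\Omega)$.

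Finally, for uniqueness, if $u_1,u_2\in X_0\cap C^{+}_{\phi_1}(\Omega)$ are two positive solutions of \eqref{ch25}, the fractional Picone inequality applied to $(u_1,u_2)$ and to $(u_2,u_1)$, combined with the equations $(-\Delta)^{s}u_i=u_i^{q-1}$, yields after subtraction
\[
\int_\Omega (u_1^{\,2}-u_2^{\,2})(u_2^{q-2}-u_1^{q-2})\,dx\le 0.
\]
Since $t\mapsto t^{q-2}$ is strictly decreasing on $(0,\infty)$, the integrand is pointwise nonnegative and strictly positive on $\{u_1\ne u_2\}$, which forces $u_1\equiv u_2$.
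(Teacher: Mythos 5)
The paper does not prove Proposition~\ref{propch1}; it is stated purely as a citation to \cite[Theorem~1.2]{adi}, so there is no in-paper argument to compare yours against. Your outline --- variational existence of a nonnegative nontrivial minimizer, positivity by the strong maximum principle, boundary behaviour via the $\varepsilon$-regularization and sub/supersolution sandwich, and uniqueness by a fractional Picone/Brezis--Oswald argument --- is indeed the standard route and each module is individually reasonable.

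There is, however, a genuine gap precisely at the step the whole proposition hinges on: the two-sided estimate $\underline{u}\asymp d^{s}$ (equivalently $\underline{u}\in C^{+}_{\phi_1}(\Om)$). Your upper barrier argument asserts that the solution $\bar v$ of $(-\De)^{s}\bar v=(c\phi_1)^{q-1}$ satisfies $\bar v\le C\phi_1$ ``by the Green-function estimates of Ros--Oton--Serra combined with the integrability $(c\phi_1)^{q-1}\sim d^{s(q-1)}\in L^{p}$ for $p<1/(s(1-q))$.'' This cannot be obtained from the Ros--Oton--Serra boundary regularity theorem (Proposition~\ref{Propch5} in this paper) because that result requires a \emph{bounded} right-hand side, whereas $(c\phi_1)^{q-1}$ blows up like $d^{-s(1-q)}$ at $\pa\Om$. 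Nor does $L^{p}$-integrability of the right-hand side by itself yield the sharp boundary decay $\bar v\lesssim d^{s}$: a quantitative Green-function (or barrier) estimate tailored to power-type singular data is needed, and constructing such a barrier --- typically by computing $(-\De)^{s}(d^{\beta s})$ for suitable $\beta\in(0,1]$ near $\pa\Om$ or by using the explicit behaviour of $\phi_1$ --- is exactly the technical content of \cite[Theorem~1.2]{adi}. As written, your argument invokes the conclusion rather than proving it.

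Two smaller issues. First, you establish the boundary estimate for $\lim_{\varepsilon\to 0}u_\varepsilon$ but then ``identify'' this limit with the variational minimizer $\underline{u}$; that identification needs uniqueness, which in turn uses the boundary estimate for \emph{both} candidates --- a mild circularity that is easiest to avoid by dropping the separate variational construction and building $\underline{u}$ directly as the monotone limit of $u_\varepsilon$. Second, the Picone computation in the uniqueness step requires $u_{1}^{2}/u_{2}$ and $u_{2}^{2}/u_{1}$ to be admissible test functions in $X_0$ and the pairing with the (singular) right-hand side to be finite; both are consequences of the boundary estimate, so this step should be ordered after, and explicitly tied to, the barrier argument rather than presented as independent.
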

\begin{Proposition}\label{Propch3}
	 Let  $ q \in (0,1), g(x,u)=u^{q-1}$ and $0 \leq H, K \in L^{\frac{2N}{N-\mu+2s}}(\Om) + L^{\frac{2N}{N-\mu}}(\Om) $. Let   $  u \in X_0$ be a positive weak solution of problem $(P_1)$. Then $u \in L^p(\Om)$ where $p \in [2, \frac{2N^2}{(N-\mu)(N-2s)})$. 
\end{Proposition}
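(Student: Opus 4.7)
The strategy is to adapt the Brezis-Kato argument of Proposition \ref{Propch1}, after a preliminary reformulation that eliminates the singularity of $u^{q-1}$. Let $\underline{u} \in X_0 \cap C_0(\overline{\Om}) \cap C^+_{\phi_1}(\Om)$ denote the unique positive solution of $(-\De)^s\underline{u} = \underline{u}^{q-1}$ in $\Om$ given by Proposition \ref{propch1}. Since $H,K\geq 0$, the weak comparison principle for $(-\De)^s$ yields $u \geq \underline{u}$ a.e.\ in $\Om$. Writing $w := u - \underline{u} \in X_0$ (so $w \geq 0$) and applying the mean value theorem to $t \mapsto t^{q-1}$ on $[\underline{u}(x),u(x)]$, one gets $u^{q-1} - \underline{u}^{q-1} = -a(x)\,w$ with
$$a(x) := (1-q)\int_0^1 \bigl(t\,u(x) + (1-t)\,\underline{u}(x)\bigr)^{q-2}\,dt \;\geq\; 0.$$
Subtracting the equations for $u$ and $\underline{u}$ turns $(P_1)$ into
$$(-\De)^s w + a(x)\,w = \left(\int_\Om \frac{H(y)\,w(y)}{|x-y|^\mu}\,dy\right) K(x) + T_0(x), \qquad T_0(x) := K(x)\!\int_\Om \frac{H(y)\,\underline{u}(y)}{|x-y|^\mu}\,dy,$$
with $a\geq 0$ and $T_0 \in X_0'$ by Hardy-Littlewood-Sobolev (since $\underline{u}\in L^\infty(\Om)$). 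As $\underline{u}\in L^\infty(\Om)$, it suffices to prove $w \in L^p(\Om)$ in the stated range.

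With the singular coefficient $a$ now carrying the favourable sign, the proof mirrors that of Proposition \ref{Propch1}. We truncate $H,K$ by $H_n,K_n \in L^{2N/(N-\mu)}(\Om)$ and $a$ by $a_n := \min\{a,n\}$, fix $\alpha>0$ large, and introduce the bilinear form
$$B_n(v,\psi) = \int_Q \frac{(v(x)-v(y))(\psi(x)-\psi(y))}{|x-y|^{N+2s}}\,dxdy + \int_\Om (a_n+\alpha)\,v\psi\,dx - \iint_{\Om\times\Om}\frac{H_n(y)v(y)K_n(x)\psi(x)}{|x-y|^\mu}\,dxdy,$$
which is continuous and coercive on $X_0$ thanks to Lemma \ref{lemch1}. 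Lax-Milgram produces a unique $w_n \in X_0$ solving $B_n(w_n,\psi) = \int_\Om T_0\,\psi\,dx + \alpha\int_\Om w\,\psi\,dx$ for all $\psi\in X_0$; uniform energy bounds together with uniqueness for the limit problem yield $w_n \rightharpoonup w$ weakly in $X_0$. Testing the equation for $w_n$ against $\phi = w_{n,\tau}^{r-1}$, with $w_{n,\tau} := \min\{w_n,\tau\}$ and $r\in[2,2N/(N-\mu))$, and dropping the nonnegative contribution $\int (a_n+\alpha)\, w_n\, w_{n,\tau}^{r-1}\,dx$, the argument proceeds exactly as for \eqref{ch6}-\eqref{ch13}: the elementary nonnegative version of Lemma \ref{lemch2} bounds the left-hand side below by $\frac{4(r-1)}{r^2}\|w_{n,\tau}^{r/2}\|^2$, Lemma \ref{lemch1} with $\varepsilon^2=(r-1)/r^2$ absorbs the Choquard cross-term, and the Hardy-Littlewood-Sobolev tail estimate \eqref{ch5}-\eqref{ch8} controls the remainder as $\tau\to\infty$. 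A finite number of iterations starting from the base integrability $w \in L^{2^*_s}$ then yields $w\in L^p(\Om)$ for all $p\in[2,2N^2/((N-\mu)(N-2s)))$, and hence the same integrability for $u$.

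The most delicate point, as announced in the paper, lies in the construction of $B_n$: since $a(x)\sim d^{s(q-2)}$ near $\pa\Om$, it fails to be in $L^{N/2s}(\Om)$ when $N\geq 2$, so Lemma \ref{lemch3} cannot be used to move $a(x)w$ to the source side. Keeping $a(x)w$ on the coercive side of $B_n$ (where it only helps coercivity, owing to $a\geq 0$) is therefore essential, and the subtraction device $w = u - \underline{u}$ is precisely what makes this sign-preserving splitting possible; without it, the singular potential $u^{q-2}$ would appear on both sides of the linearised problem and the Lax-Milgram hypotheses could not be verified.
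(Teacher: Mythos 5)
Your proof is correct and takes a genuinely different route from the paper. Both proofs begin the same way: establishing $\underline{u}\leq u$ a.e.\ by comparison with the solution of $(-\De)^s\underline{u}=\underline{u}^{q-1}$ from Proposition \ref{propch1}. After that the strategies diverge. The paper keeps $u$ as the unknown and controls the singular term by the pointwise bound $\chi_{\{u<\ba\}}u^{q-1}\leq\de(\ba)\chi_{\{u<\ba\}}\,u/d^{2s}$ (with $\de(\ba)\sim\ba^q$), putting the resulting Hardy-type potential $-\de(\ba)\chi_{\{u<\ba\}}d^{-2s}$ into the bilinear form on the \emph{wrong} side of coercivity and compensating it by choosing $\ba$ small so that $\frac12-S_H\de(\ba)>0$; the bounded remainder $\chi_{\{u\geq\ba\}}u^{q-1}$ goes to the source. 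You instead pass to $w=u-\underline{u}$ and linearize $u^{q-1}-\underline{u}^{q-1}=-a(x)w$ with $a\geq 0$, so that the singular coefficient lands on the \emph{good} side of coercivity and can simply be dropped in the Caccioppoli step — no careful tuning of a truncation parameter against the Hardy constant $S_H$ is needed, and $S_H$ only enters to check well-posedness of the limit bilinear form. The price you pay is the new source term $T_0=K\cdot\bigl(|x|^{-\mu}*(H\underline{u})\bigr)$, which unlike the paper's $T=\chi_{\{u\geq\ba\}}u^{q-1}+\al u$ is not of the form bounded-plus-$L^r$, so the analogue of step \eqref{ch19} is not immediate. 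It does close, but it deserves a sentence: write $K=K\chi_{\{|K|\leq M\}}+K\chi_{\{|K|>M\}}$, so $T_0$ splits into a bounded piece and a piece in $L^{\frac{2N}{N-\mu+2s}}(\Om)$ of arbitrarily small norm; then for $r<\frac{2N}{N-\mu}$ one checks that $(r-1)\frac{2N}{N+\mu-2s}\leq\frac{rN}{N-2s}$, so $\|w_{n,\tau}^{r-1}\|_{\frac{2N}{N+\mu-2s}}\lesssim\|w_{n,\tau}^{r/2}\|_{2^*_s}^{2(r-1)/r}$, a subquadratic power that Young's inequality absorbs into the left-hand side. Flagging this explicitly would remove the only genuine gloss in the writeup; otherwise the argument is sound and is arguably cleaner than the paper's $\ba$--$\de(\ba)$--$S_H$ bookkeeping.
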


\begin{proof}
	Since $0 \leq H, K,$ we see that  $\underline{u}\in X_0$ is a subsolution to problem $(P_1)$. \\
	\textbf{Claim:} $\underline{u}\leq u $ a.e in $\Om$. \\
Assuming by contradiction, assume that the  Claim is not true. Since for any $ u \in X_0$ we have 
\begin{align*}
\|u^+\|^2 \leq \int_{ Q} \frac{(u(x)-u(y))(u^+(x)-u^+(y))}{|x-y|^{N+2s}}~dxdy. 
\end{align*}
	Testing $(-\De)^s\underline{u}- (-\De)^s u\leq \underline{u}^{q-1}- u^{q-1} $ with $(\underline{u}-u)^+$, we obtain 
	\begin{align*}
	0\leq \|(\underline{u}-u)^+\|^2 & \leq \int_{ Q} \frac{((\underline{u}-u)^+(x)-(\underline{u}-u)^+(y))((\underline{u}-u)(x)-(\underline{u}-u)(y))}{|x-y|^{N+2s}}~dxdy\\
	& \leq \int_{ \Om} ( \underline{u}^{q-1}- u^{q-1})(\underline{u}-u)^+~dx \leq 0. 
	\end{align*}
 It implies $|\{  x \in \Om \; : \; \underline{u } \geq u \text{ a.e in } \Om \}| = 0$. It provides the expected  contradiction. Hence $\underline{u}\leq u $ a.e in $\Om$.\\	
%Since   $ q \in (0,1)$ and $u$ be a positive solution to the following problem
%	 \begin{equation*}
%	 (-\De)^s u
%	 =u^{q-2}u + \left(\ds \int_{\Om}\frac{H(y)u(y)}{|x-y|^{\mu}}dy\right) K   \; \text{in}\;
%	 \Om,
%	 u=0 \; \text{ in } \R \setminus \Om.
%	 \end{equation*}
	 Observe that   using Proposition \ref{propch1}, for   all $\ba >0$, we have  
	 \begin{align*}
	 \chi_{\{u< \ba \} }u^{q-1}\leq  \chi_{\{u< \ba \} }\frac{u}{\underline{u}^2} u^{q} < \chi_{\{u< \ba \} }\frac{u}{C_1^2\phi_1^2} \ba^{q} \leq  \chi_{\{u< \ba \} }\frac{u}{C_1^2C_2^2d^{2s}} \ba^{q}. 
	 \end{align*}
	where  $C_1$ and $C_2$ are appropriate positive constants. Hence we  can choose $\de:= \de(\ba)>0$ such that $\chi_{\{u< \ba \} }u^{q-1}= \de(\ba) \chi_{\{u< \ba \} }\frac{u}{d^{2s}}$.  Now choose $\ba >0$ such that $\ga_1:= \frac12 - S_{H} \de(\ba)>0$ and $\ga_2:= \frac{3(r-1)}{r^2} - S_{H}\de(\ba)>0$ for $2\leq r< \frac{2N}{N-\mu}$ and with  $S_H$ defined on \eqref{ch35}.  The choice of $\ba, \de(\ba)$ and Lax-Milgram Lemma,  imply that $u$ is the unique solution of the following problem: 
	\begin{equation*}
	(-\De)^s u+ \al u  -\de(\ba) \chi_{\{u< \ba \} }\frac{u}{d^{2s}}
	= \left(\ds \int_{\Om}\frac{H(y)u(y)}{|x-y|^{\mu}}dy\right) K +\chi_{\{u\geq \ba \} }u^{q-1}  + \al u   \; \text{in}\;
	\Om,
	u=0 \; \text{ in } \R \setminus \Om
	\end{equation*}
	 where $\al>0$ is chosen  as in Proposition \ref{Propch1}. Now we will follow the same arguments as in Proposition \ref{Propch1} to achieve the result. Notice that $T= \chi_{\{u\geq \ba \} }u^{q-1}  + \al u  \in X_0^\prime$. For each $n \in \N$,  we define the bilinear form 
	 \begin{align*}
	 B_n(v, w )= & \int_{ Q} \frac{(v(x)-v(y))(w(x)-w(y))}{|x-y|^{N+2s}}~dxdy +\al \int_{ \Om} vw~dx \\&  \quad  - \int_{ \Om} \int_{\Om}\frac{H_n(y)v(y)K_n(x)w(x)}{|x-y|^{\mu}}~dxdy- \int_{ \Om}\de(\ba) \chi_{\{u< \ba \} }\frac{vw}{d^{2s}} ~dx.
	 \end{align*}
	 Using as the arguments as in Proposition \ref{Propch1}, there exist  unique  $u_n \in X_0$  such that for all $w \in X_0$ we have 
	 \begin{align*}\label{ch16}
	 B_n(u_n, w)= \int_{ \Om}T w~dx. 
	 \end{align*}
 Moreover, $u_n$ is a unique solution to  the problem
	 \begin{equation*}\label{ch17}
	 (-\De)^s u_n + \al u_n 
	 = \left(\ds \int_{\Om}\frac{H_n(y)u_n(y)}{|x-y|^{\mu}}dy\right) K_n +\de(\ba) \chi_{\{u< \ba \} }\frac{u_n}{d^{2s}}  +T \; \text{in}\;
	 \Om,	u_n=0 \; \text{ in } \R \setminus \Om.
	 \end{equation*}
	 Clearly, $u_n \rp u$ weakly in $X_0$.  Let     $u_{n, \tau}=  \max\{-\tau , \min\{  u_n,\tau \} \} $ for  $ \tau>0$ and $x \in \Om$.  Choose  $ \phi = |u_{n, \tau}|^{r-2} u_{n, \tau} \in X_0$ ($2\leq r< \frac{2N}{N-\mu}$) as the  test function in \eqref{ch12}. Using the same arguments as in Proposition \ref{Propch1}, we have 
	 \begin{equation}\label{ch18}
	 \begin{aligned}
	 \frac{3(r-1)S_s}{r^2}&  \left( \int_{ \Om} |u_{n,\tau}|^{\frac{rN}{(N-2s)}}~dx\right)^{\frac{N-2s}{N}}\\
	 & \leq  C_r \int_{ \Om}|u_{n}|^{r} + \int_{E_{n,\tau}} \int_{ \Om} \frac{|H_n(y)u_{n}(y)||K_n(x)||u_{n}(x)|^{r-1}}{|x-y|^{\mu}}dxdy \\
	 & \quad +  \int_{ \Om}\de(\ba) \chi_{\{u< \ba \} }\frac{u_n}{d^{2s}}|u_{n, \tau}|^{r-2} u_{n, \tau}~dx    + \int_{\Om}g |u_{n, \tau}|^{r-2} u_{n, \tau}~dx. 
	 \end{aligned}
	 \end{equation}
	 Consider
	 \begin{equation}\label{ch19}
	 \begin{aligned}
	 \int_{\Om}T |u_{n, \tau}|^{r-2} u_{n, \tau}~dx &  = \int_{ \Om} \chi_{\{u\geq \ba \} }(u^{q-1} + \al u) |u_{n, \tau}|^{r-2} u_{n, \tau}~dx\\
	 & \leq C(N,\mu , r, |\Om|) \left(  \int_{\Om}  |u|^{r} ~dx  + \int_{\Om}   |u_{n}|^{r} ~dx\right).
	 \end{aligned}
	 \end{equation}
	 With the help of Hardy inequality, we have   
	 \begin{equation}\label{ch20}
	 \begin{aligned}
	 \int_{ \Om} \de(\ba) \chi_{\{u< \ba \} }\frac{u_n}{d^{2s}}|u_{n, \tau}|^{r-2} u_{n, \tau}~dx  & \leq 2 \int_{E_{n,\tau}} \de(\ba)  \frac{|u_{n}|^{r}}{d^{2s}} ~dx + \int_{ \Om}   \frac{|u_{n, \tau}|^{r}}{d^{2s}}~dx \\
	 & \leq S_{H}\de(\ba) \||u_{n \tau}|^{r/2}\|^2 + 2 \int_{E_{n,\tau}} \de(\ba)  \frac{|u_{n}|^{r}}{d^{2s}} ~dx .
	 \end{aligned}
	 \end{equation}
	 Using Dominated Convergence theorem, it follows that  $\ds \lim_{\tau \ra \infty} \int_{E_{n,\tau}}   \frac{ \de(\ba)|u_{n}|^{r}}{d^{2s}} ~dx=0$. 
	 Now taking into account \eqref{ch18}, \eqref{ch19}, \eqref{ch20}, definition of $\ga_2$  and  letting $\tau \ra \infty$, we have 
	 \begin{equation*}
	 \begin{aligned}
	 \left( \int_{ \Om} |u_{n}|^{\frac{rN}{(N-2s)}}~dx\right)^{\frac{N-2s}{N}}
	 & \leq  C(N,\mu , r, |\Om|)  \left( \int_{\Om}|u_{n}|^r~dx + \int_{\Om}|u|^r~dx \right). 
	 \end{aligned}
	 \end{equation*}
	 Therefore, 
	 \begin{align*}
	 \limsup_{n \ra \infty} \left( \int_{ \Om} |u_{n}|^{\frac{rN}{(N-2s)}}~dx\right)^{\frac{N-2s}{N}} \leq  C(N,\mu , r, |\Om|)  \limsup_{n \ra \infty}  \left( \int_{\Om}|u_{n}|^r~dx + \int_{\Om}|u|^r~dx \right). 
	 \end{align*}
	 Hence, $ u \in L^r(\Om)$ for all  $ r\in \left[2, \frac{2N^2}{(N-\mu)(N-2s)}\right) $. As earlier we remark that  there exists a  positive constant $ C(N,\mu , q, |\Om|) $ such that $|u|_q\leq C(N,\mu , q,  |\Om|) |u|_{2^*_s}$.  \QED 
\end{proof}

\begin{Remark}
We highlight here that the next lemma investigates the $L^\infty(\Om)$ bound for the fractional Laplacian with critical Sobolev exponent. 
	In \cite{servadei}  authors  already  proved  this type of result for a positive solution. Here we used the ideas form \cite{ squassina,servadei} to extend the result of \cite{servadei} to any weak solution. 
\end{Remark}
\begin{Lemma}\label{lemch5}
	Let $u$ be any weak solution to the following problem 
	\begin{align}\label{ch36}
	(-\De)^su = k(x,u) \text{ in } \Om,\; u=0 \text{ in } \R \setminus \Om
	\end{align}
	where $|k(x,u)|\leq C(1+|u|^{2^*_s-1})$ and $C>0$. Then $u \in L^\infty(\Om)$. 
\end{Lemma}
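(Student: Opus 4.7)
The plan is to run a Moser-type iteration in the spirit of Brezis--Kato, where the critical growth of $k$ is neutralized by an absorption argument based on absolute continuity of the Lebesgue integral on sublevel sets of $u$. The main point is that, since $u \in X_0 \hookrightarrow L^{2^*_s}(\Omega)$, the offending factor $|u|^{2^*_s-2}$ lies in $L^{N/(2s)}(\Omega)$, and its tail can be made arbitrarily small.

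First, for $r \geq 2$ and $T > 0$, let $u_T = \max\{-T, \min\{u, T\}\}$ and insert the admissible test function $\phi = u_T |u_T|^{r-2} \in X_0$ into the weak formulation of \eqref{ch36}. Lemma \ref{lemch2} converts the resulting Gagliardo-type bilinear form on $u$ into one bounding $\||u_T|^{r/2}\|^2$ from above, and combined with the Sobolev embedding $X_0 \hookrightarrow L^{2^*_s}(\Om)$ yields
\[
\frac{4(r-1) S_s}{r^2} \left( \int_\Omega |u_T|^{r \cdot 2^*_s/2} \, dx \right)^{2/2^*_s} \leq \int_\Omega k(x,u) u_T |u_T|^{r-2} \, dx \leq C \int_\Omega (1 + |u|^{2^*_s-1}) |u_T|^{r-1} \, dx.
\]

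Second, to handle the critical term on the right I decompose $\Omega$ by a threshold $M>0$. On $\{|u|\leq M\}$ the integrand is bounded by $C(M)|u_T|^{r-1}$, contributing at most $C(M)(1+|u_T|_r^r)$ after Young's inequality. On $\{|u|>M\}$ I write $|u|^{2^*_s-1}|u_T|^{r-1} = |u|^{2^*_s-2}\cdot |u||u_T|^{r-1}$ and apply Hölder's inequality with exponents $N/(2s)$ and $2^*_s/2$, together with the pointwise control $|u||u_T|^{r-1}\leq |u_T|^r + T^{r-1}|u|\chi_{\{|u|>T\}}$, whose last piece vanishes in $L^1$ as $T\to\infty$ by dominated convergence. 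Since $u\in L^{2^*_s}(\Om)$, I can choose $M=M(r)$ so large that
\[
\Bigl( \int_{\{|u|>M\}} |u|^{2^*_s} \, dx \Bigr)^{(2^*_s-2)/2^*_s} \leq \frac{2(r-1) S_s}{C r^2},
\]
which lets me absorb the resulting term into the left-hand side. Passing to the limit $T\to\infty$ by monotone convergence then gives, for every $r\geq 2$ with $u\in L^r(\Om)$,
\[
\Bigl( \int_\Omega |u|^{r \cdot 2^*_s/2} \, dx \Bigr)^{2/2^*_s} \leq C(r,u)\Bigl(1 + \int_\Omega |u|^r \, dx \Bigr).
\]

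Third, starting from $u\in L^{2^*_s}(\Om)$ and iterating with $r_{k+1}=(2^*_s/2)r_k$, one obtains $u\in L^p(\Om)$ for every $p<\infty$. Once this is known, $|u|^{2^*_s-2}\in L^q(\Om)$ for some $q>N/(2s)$, so equation \eqref{ch36} falls into the subcritical regime: the usual Moser iteration applied with the same truncated test function, now with constants that remain uniformly controlled along $r_k\to\infty$, yields $\|u\|_{L^\infty(\Om)}<\infty$.

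The principal obstacle is that the Sobolev exponent is exactly the exponent of the right-hand side, so a direct Hölder increment of integrability is not available. The essential device is the threshold $M$: making it large enough that the tail $\int_{\{|u|>M\}}|u|^{2^*_s}$ beats the unfavorable factor $r^2/S_s$ arising from the Sobolev inequality, thereby allowing absorption. This is the only place where the hypothesis $u\in X_0\subset L^{2^*_s}$ is used in a nontrivial quantitative way; everything thereafter is subcritical machinery.
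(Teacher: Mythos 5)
Your argument is sound and runs on the same Brezis--Kato/Moser engine as the paper's, but it is organized differently, and the difference is worth noting. You absorb the critical tail $\int_{\{|u|>M\}}|u|^{2^*_s}$ afresh at every exponent $r$, taking $M=M(r)\to\infty$ since your target bound $2(r-1)S_s/(Cr^2)$ shrinks to zero, and you iterate $r_{k+1}=(2^*_s/2)r_k$ only to establish $u\in L^p(\Om)$ for all finite $p$; the passage to $L^\infty$ is then delegated to subcritical theory. The paper instead absorbs exactly once, at the single fixed exponent $r_1=2^*_s+1$ with a correspondingly fixed threshold $R$ as in \eqref{ch39}, proving $u\in L^{2^*_sr_1/2}(\Om)$, and then iterates the \emph{unabsorbed} inequality $\||u|^{r/2}\|_{2^*_s}^2\leq \mc C_r\bigl(1+\int_\Om|u|^{r+2^*_s-2}\bigr)$ along the recursion $r_{j+1}-2=(2^*_s/2)(r_j-2)$, controlling the infinite product $\prod_j\mc C_j^{1/(r_j-2)}$ to extract the explicit estimate $|u|_\infty\leq \mc C_0 D_1$. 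Both routes prove the stated $u\in L^\infty(\Om)$, but the paper's bookkeeping produces the quantitative $L^\infty$ bounds recorded in Theorems~\ref{thmch1} and~\ref{thmch2}, which are then exploited in an essential way in the proof of Theorem~\ref{thmch3}: there one takes $R=0$ in \eqref{ch42} once $|w_n|_{2^*_s}$ is small and needs the resulting bound to be uniform in $n$. Your two-phase argument, as written, discards that quantitative information, so it proves the lemma as stated but would not support the later application directly. One smaller point: the closing sentence of your proof asserts rather than argues that once $|u|^{2^*_s-2}\in L^q(\Om)$ with $q>N/(2s)$ the iteration constants become $r$-independent; this is true and standard (a single fixed cut of the potential suffices because its tail is now governed by a strictly supercritical integrability exponent), but it should be stated, since it is the hinge on which the final bound turns.
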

\begin{proof}
	Let $u \in X_0$ be any weak solution to \eqref{ch36}. Let  $u_\tau=  \max\{-\tau , \min\{  u,\tau \} \} $ for  $ \tau>0$.   Let   $ \phi = u|u_\tau|^{r-2}  \in X_0$ ($ r\geq 2$) be a test function to problem \eqref{ch36},   then by inequality \eqref{ch1}, we deduce that
	\begin{equation}\label{ch37}
	\begin{aligned}
	|u|u_\tau|^{\frac{r}{2}-1}|^2_{2^*_s}& \leq C \|u|u_\tau|^{\frac{r}{2}-1}\|^2  \leq \frac{Cr^2}{r-1} \int_{Q} \frac{(u(x)-u(y))(\phi(x)-\phi(y))}{|x-y|^{N+2s}}~dxdy\\
	&\leq  Cr \int_{ \Om} |k(x,u)||u ||u_\tau|^{r-2}~dx\\
	& \leq Cr \int_{ \Om} |u ||u_\tau|^{r-2}+|u|^{2^*_s}|u_\tau|^{r-2}  ~dx. 
	\end{aligned}
\end{equation} 
	\textbf{Claim:} Let $r_1= 2^*_s+1$. Then $ u \in L^{\frac{2^*_s r_1}{2}}(\Om)$.\\ 
For this, consider
	\begin{equation}\label{ch38}
	\begin{aligned}
	\int_{ \Om} |u|^{2^*_s}|u_\tau|^{r_1-2}  ~dx& = \int_{ |u|\leq R}  |u|^{2^*_s}|u_\tau|^{r_1-2}  ~dx+ \int_{|u|>R} |u|^{2^*_s}|u_\tau|^{r-2}  ~dx\\
	& \int_{ |u|\leq R}  R^{2^*_s}|u_\tau|^{r_1-2}  ~dx+\left(\int_{\Om}( u^2|u_\tau|^{r-2})^{\frac{2^*_s}{2}}  ~dx\right)^{\frac{2}{2^*_s}} \left( \int_{|u|>R} |u|^{2^*_s}  ~dx\right)^{\frac{2^*_s-2}{2^*_s}}. 
	\end{aligned}
	\end{equation}
	Choose $R>0$ large enough such that 
	\begin{align}\label{ch39}
	\left( \int_{|u|>R} |u|^{2^*_s}  ~dx\right)^{\frac{2^*_s-2}{2^*_s}} \leq \frac{1}{2Cr_1}.
	\end{align}
	Taking into account \eqref{ch37}, \eqref{ch38} jointly with \eqref{ch39}, we obtain
	\begin{align*}
	|u|u_\tau|^{\frac{r_1}{2}-1}|^2_{2^*_s}& \leq Cr_1\left(\int_{ \Om} |u |^{2^*_s}~dx + \int_{ \Om}  R^{2^*_s}|u|^{2^*_s-1}  ~dx\right).
	\end{align*}
	Appealing  Fatou's Lemma as $\tau \ra \infty$,  we obtain 
		\begin{align}\label{ch41}
	||u|^{\frac{r_1}{2}}|^2_{2^*_s}& \leq Cr_1\left(\int_{ \Om} |u |^{2^*_s}~dx + \int_{ \Om}  R^{2^*_s}|u|^{2^*_s-1}  ~dx\right)<\infty.
	\end{align}
	This establishes the Claim. Now let $\tau \ra \infty$ in  \eqref{ch37}, we deduce that 
		\begin{equation*}
	\begin{aligned}
	||u|^{\frac{r}{2}}|^2_{2^*_s}& \leq Cr \int_{ \Om} |u |^{r-1}+|u|^{r+2^*_s-2}  ~dx \leq 2Cr(1+|\Om|)  \left(1+ \int_{ \Om} |u|^{r+2^*_s-2} \right).
	\end{aligned}
	\end{equation*}
	It implies that 
	\begin{equation}\label{ch40}
	\begin{aligned}
\left(1+ 	\int_{ \Om}|u|^{\frac{2^*_s r}{2}}\right)^{\frac{2}{2^*_s(r-2)}}& \leq  \mc C_r^{\frac{1}{(r-2)}}  \left(1+ \int_{ \Om} |u|^{r+2^*_s-2} \right)^{\frac{1}{(r-2)}}
	\end{aligned}
	\end{equation}
	where $\mc C_r= 4Cr(1+|\Om|)$. For $j\geq 1$, we define $r_{j+1}$ iteratively as $ r_{j+1} +2^*_s-2= \frac{2^*_s r_j}{2}$. It implies 
	\begin{align*}
	\left(r_{j+1}-2 \right)= \left(\frac{2^*_s}{2}\right)^j \left(r_1-2 \right).
		\end{align*}
		From \eqref{ch40}, we get 
		\begin{align*}
		\left(1+ 	\int_{ \Om}|u|^{\frac{2^*_s r_{j+1}}{2}}\right)^{\frac{2}{2^*_s(r_{j+1}-2)}}& \leq  \mc C_{j+1}^{\frac{1}{(r_{j+1}-2)}}  \left(1+ \int_{ \Om} |u|^{\frac{2^*_s r_j}{2}} \right)^{\frac{2}{2^*_s(r_j-2)}}
		\end{align*}
	where $\mc C_{j+1}:= 4Cr_{j+1}(1+|\Om|)$. Denote $ D_j=  \left(1+ \int_{ \Om} |u|^{\frac{2^*_s r_j}{2}} \right)^{\frac{2}{2^*_s(r_j-2)}}$, for $j\geq 1$. 
	By limiting arguments, one can easily prove that, for $j>1$, 
\begin{align*}
D_{j+1}\leq \prod_{k=2}^{j+1} \mc C_{k}^{\frac{1}{(r_{k}-2)}} D_1 \leq \mc C_0 D_1. 
\end{align*}
It implies that $|u|_\infty\leq \mc C_0 D_1$ where $D_1$ is explicitly given in \eqref{ch41}.
 \QED
\end{proof}

\section{Proof of Theorem 
\ref{thmch1} and \ref{thmch2}}
In this section we will conclude the proofs of Theorem \ref{thmch1} and Theorem \ref{thmch2}. Before this we recall the following result, which can be consulted in  \cite{RS}. 
\begin{Proposition}\label{Propch5}
	Let $\Om$ be a bounded  Lipschitz domain  satisfying the  exterior  ball condition, $g \in L^\infty(\Om)$ and $u$ be a solution of \eqref{ch24}. Then $u \in C^s(\R)$ and 
	\begin{align*}
	\|u\|_{C^s(\R)}\leq C\|g\|_{L^\infty(\Om)}
	\end{align*}
	where $C$ is a constant depending on $\Om$ and $s$. 
\end{Proposition}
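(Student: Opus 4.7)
The plan is to establish the global $C^s$ regularity by combining a boundary barrier argument with interior regularity estimates for the fractional Laplacian. The exterior ball condition is the decisive geometric hypothesis: it gives us a local comparison function at every boundary point.

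First I would prove the key pointwise bound
\begin{equation*}
|u(x)| \le C \|g\|_{L^\infty(\Om)}\, d(x)^s \quad \text{for all } x \in \overline{\Om}.
\end{equation*}
The main tool is the explicit torsion-type solution $\varphi(x)=(1-|x|^2)_+^s$ in the unit ball, which satisfies $(-\Delta)^s \varphi = \kappa_{N,s}$ in $B_1$ and vanishes outside, so that $d(\cdot,\partial B_1)^s$ is comparable to a solution with bounded data. At each boundary point $x_0 \in \partial \Om$, the exterior ball condition produces a ball $B_\rho(y_0) \subset \R \setminus \Om$ tangent to $\partial\Om$ at $x_0$. Translating, rescaling, and subtracting $\varphi$ centered at $y_0$, one builds a radial supersolution $\psi$ with $(-\Delta)^s \psi \ge \|g\|_{L^\infty(\Om)}$ in a neighborhood of $x_0$ inside $\Om$, with $\psi \ge 0$ in $\R\setminus\Om$ and $\psi(x)\le C\|g\|_\infty d(x)^s$ near $x_0$. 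The weak maximum principle for $(-\Delta)^s$ (applied to $\pm u - \psi$) then yields the claimed pointwise decay at the boundary.

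Next I would invoke the interior regularity: for any ball $B_{2r}(x_0) \subset \Om$ and any solution $v$ of $(-\Delta)^s v = g$ in $B_{2r}$ with $g \in L^\infty$, Caffarelli--Silvestre / Silvestre interior estimates give
\begin{equation*}
[v]_{C^\beta(B_r(x_0))} \le C \bigl( r^{-\beta}\|v\|_{L^\infty(\R)} + r^{2s-\beta}\|g\|_{L^\infty} \bigr)
\end{equation*}
for any $\beta<\min\{1,2s\}$; in particular for $\beta=s$. The tail contribution in the nonlocal norm of $u$ is harmless because $u=0$ outside $\Om$ and $u$ is bounded in $\Om$ (which follows from the boundary bound above together with interior $L^\infty$ estimates, or directly from a truncation/De~Giorgi iteration using $g\in L^\infty$).

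Finally I would patch together the boundary decay and the interior estimate to control $|u(x)-u(y)|$ for arbitrary $x,y\in\R$. If both points lie far from $\partial\Om$ relative to $|x-y|$, apply the interior estimate at scale $r \sim d(x)$. If one point is close to the boundary, use the bound $|u(x)|\le C\|g\|_\infty d(x)^s$ together with $|d(x)^s - d(y)^s|\le |x-y|^s$ to conclude $|u(x)-u(y)|\le C\|g\|_\infty |x-y|^s$. The case $x,y \in \R\setminus \Om$ is trivial since $u\equiv 0$ there. Balancing the two regimes by choosing the scale $r$ as a suitable power of $|x-y|$ yields the global $C^s$ estimate with constant proportional to $\|g\|_{L^\infty(\Om)}$.

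The main obstacle is the boundary step: producing a barrier that captures the sharp $d^s$ decay and is compatible with the nonlocal operator requires the exterior ball condition in an essential way, and the comparison/maximum principle must be applied to functions that do not vanish identically outside $\Om$, which is where the explicit computation with $(1-|x|^2)_+^s$ becomes indispensable. Once the barrier is in place, the passage from pointwise boundary decay plus interior $C^s$ estimates to global $C^s$ is a standard balancing argument.
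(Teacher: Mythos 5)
Your outline is correct and reproduces the structure of the proof in Ros-Oton and Serra \cite{RS}, which is exactly the reference the paper cites for this proposition (the paper states it without proof); the barrier $(1-|x|^2)_+^s$ combined with the exterior ball condition, the interior $C^\beta$ estimate, and the patching of boundary decay with interior regularity are precisely the ingredients used there.
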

\textbf{Proof of Theorem \ref{thmch1} :}  Let  $ u \in X_0$ be a positive weak solution to problem $(P)$ and  $H= F(u)/u$ and $K=f$. Then 
From  Proposition \ref{Propch1},  we get   $ u \in L^r(\Om)$ for all  $ r\in \left[2, \frac{2N^2}{(N-\mu)(N-2s)}\right) $. 
It implies $F(u) \in L^r(\Om)$ for all  $ r\in \left[\frac{2N}{2N-\mu}, \frac{2N^2}{(N-\mu)(2N-\mu)}\right)$. Observe that $
\frac{2N}{2N-\mu}< \frac{N}{N-\mu}<  \frac{2N^2}{(N-\mu)(2N-\mu)}$ and  there exists a constant $C(N,\mu, |\Om|)>0$ such that $|F(u)|_{\frac{N}{N-\mu}} \leq C(N,\mu, |\Om|) |u|_{2^*_s}$. Therefore, we infer that $\int_{\Om} \frac{F(u)}{|x-y|^\mu} ~dy \in L^{\infty}(\Om)$ and 
\begin{align*}
\bigg| \int_{\Om} \frac{F(u)}{|x-y|^\mu} ~dy\bigg|_{\infty} \leq  C(N,\mu, |\Om|) |u|_{2^*_s}. 
\end{align*}
Using the assumptions on  $f$ and $g$, we obtain 
\begin{align*}
(-\De)^s u&  = g(x,u)+ \left(\ds \int_{\Om}\frac{F(u)(y)}{|x-y|^{\mu}}dy\right) f \\
& \leq C(N,\mu, |\Om|)  (1+ |u|_{2^*_s}) (1+ |u|^{2^*_s-1})= \mc S_0 (1+ |u|^{2^*_s-1})(\text{say}). 
\end{align*}
From Lemma  \ref{lemch5}, we have $ u \in L^\infty(\Om)$. Furthermore, there exists  a  function $ C_0>0 $ independent of   $N,\mu, s$ and  $|\Om|$ such that 
\begin{align*}
|u|_{\infty} \leq  C_0  \mc S_0^{\frac{2}{(2^*-1)(2^*-2)}}D_1  
\end{align*}
\begin{align*}
\text{ with  }\quad D_1\leq\left( 1+ \left( (2^*_s+1)\mc S_0 \left(\int_{ \Om} |u |^{2^*_s}~dx + \int_{ \Om}  R^{2^*_s}|u|^{2^*_s-1}  ~dx\right) \right)^{\frac{2^*_s}{2}} 
\right)^{\frac{2}{2^*_s(2^*_s-1)}}
\end{align*}
and $R>0$ chosen large enough such that 
	\begin{align}\label{ch42}
\left( \int_{|u|>R} |u|^{2^*_s}  ~dx\right)^{\frac{2^*_s-2}{2^*_s}} \leq \frac{1}{2(2^*_s+1)\mc S_0}.
\end{align}
Now  using Proposition \ref{Propch5}, we obtain  that $u \in C^s(\R)$. \QED

%\textbf{Proof of Theorem \ref{thmch2} :} By using Proposition \ref{Propch2} and the same arguments as  in  proof of Theorem \ref{thmch1}, we have the desired result.  \QED

\textbf{Proof of Theorem \ref{thmch2} :}   From Proposition \ref{Propch3}, and the assumption on  $f$, we have 
\begin{align*}
\int_{\Om} \frac{F(u)}{|x-y|^\mu} ~dy \in L^{\infty}(\Om).
\end{align*}
Furthermore,  there exists a constant $C(N,\mu ,  |\Om|) >0$ such that $|F(u)|_{\frac{N}{N-\mu}} \leq C(N,\mu , |\Om|)  |u|_{2^*_s}$. Therefore, we infer that 
\begin{align*}
(-\De)^s u&  \leq u^{q-1}+ C(N,\mu ,   |\Om|) |u|_{2^*_s} |f|  \leq u^{q-1}+ C(N,\mu ,  |\Om|)  |u|_{2^*_s} (1+ |u|^{2^*_s-1}). 
\end{align*} 
Let $ \psi \in \mathbb{R} \ra [0,1]$ be a $C^\infty(\mathbb{R})$ convex increasing function such that $\psi^\prime(t)\leq 1$ for all $t \in [0,1]$  and $\psi^\prime(t)=1$ when $t\geq 1$. Define $\psi_\e(t)= \e \psi(\frac{t}{\e})$ then using the fact that $\psi_\e$ is smooth, we obtain $\psi_\e \ra (t-1)^+$ uniformly as $\e \ra 0$.  
%\begin{align*}
%\psi(s) =  \left\{
%\begin{array}{ll}
%0 ,  &  \text{ if }  s\leq 0 , \\
%1, & \text{ if } s\geq 1.
%\end{array} 
%\right.
%\end{align*}
%Define $\psi_\e (t) = \psi\left( \frac{t-1}{\e} \right)$ for all $t \in \mathbb{R}$ and $\e>0$. 
It implies 
\begin{align*}
(-\De)^s \psi_\e(u) \leq \psi_\e^\prime(u)(-\De)^s u&  \leq \chi_{\{ u >1\}}(-\De)^s u\\
&  \leq \chi_{\{ u >1\}} (u^{q-1}+ C(N,\mu , |\Om|) |u|_{2^*_s} (1+ |u|^{2^*_s-1}))\\
&  \leq \max\{ 1, C(N,\mu ,|\Om|) |u|_{2^*_s} \}(1+ ((u-1)^+)^{2^*_s-1})\\
&=  \mc S_1 (1+ ((u-1)^+)^{2^*_s-1}) \text{ (say)}. 
\end{align*}
Hence, as $\e \ra 0$,  we deduce that
\begin{align*}
(-\De)^s (u-1)^+  \leq  \mc S_1 (1+ ((u-1)^+)^{2^*_s-1}). 
\end{align*}
Employing Lemma \ref{lemch5}, we deduce that $(u-1)^+ \in L^\infty(\Om)$, that is, $ u \in L^\infty(\Om)$.
Furthermore,  since $u$ is a positive solution, there exists $C_1>0 $ such that 
 independent of   $N,\mu, s $ and  $|\Om|$ such that 
\begin{align*}
|u|_{\infty} \leq 1+  C_1 \mc S_1^{\frac{2}{(2^*-1)(2^*-2)}}D_1  
\end{align*}
\begin{align*}
\text{ with  }\quad D_1\leq\left( 1+ \left( (2^*_s+1)\mc S_1 \left(\int_{ \Om} |(u-1)^+ |^{2^*_s}~dx + \int_{ \Om}  R^{2^*_s}|(u-1)^+|^{2^*_s-1}  ~dx\right) \right)^{\frac{2^*_s}{2}} 
\right)^{\frac{2}{2^*_s(2^*_s-1)}}
\end{align*}
and $R>0$ chosen large enough such that 
\begin{align*}
\left( \int_{|u|>R} |(u-1)^+|^{2^*_s}  ~dx\right)^{\frac{2^*_s-2}{2^*_s}} \leq \frac{1}{2(2^*_s+1)\mc S_1}.
\end{align*}
  Let  $\overline{u}$ be  the  unique solution (See \cite[Theorem 1.2, Remark 1.5]{adi}) to the following problem
\begin{align*}
(-\De)^s \overline{u} = \overline{u}^{-q}+ c, u>0 \text{ in } \Om, u =0 \text{ in } \R\setminus \Om
\end{align*}
where $c= C_1|F(u) f(u)|_\infty $ with $C_1= \bigg|\ds \int_{\Om}\frac{dy}{|x-y|^{\mu}}\bigg|_\infty$.  Then  following similar lines  as in  the proof of Claim in Proposition \ref{Propch3}, we have  $\underline{u}\leq u \leq \overline{u}$ a.e in $\Om$ where  $\underline{u}$   is  the   unique solution to \eqref{ch25}.  Therefore, $ u \in X_0  \cap L^\infty(\Om)\cap C^+_{\phi_1}(\Om) $.  Now from \cite[Theorem 1.3]{gts1}, we have the desired result.\QED

\section{Applications}
The purpose of this  section is to derive applications from the uniform estimates given in Theorems \ref{thmch1} and \ref{thmch2}.  Precisely, here, we prove the theorem \ref{thmch3} which deals with $H^s$ versus $C^0_d(\overline{\Om})$ weighted   minimizers. Furthermore, we provide an application of  this result, concerning the existence and multiplicity of solutions. 

\textbf{Proof of Theorem \ref{thmch3}: (i) implies (ii).}   
Assume by contradiction that there exists a sequence $v_n \ra v_0$ in $C^0_d(\overline{\Om})$ and $J(v_n)< J(v_0)$.  It follows that 
\begin{align*}
\int_{\Om}G(x,v_n)~dx  \ra \int_{\Om}G(x,v_0)~dx  \text{ and } \iint_{\Om\times \Om} \frac{F(v_n)F(v_n)}{|x-y|^\mu} ~dxdy \ra \iint_{\Om\times \Om} \frac{F(v_0)F(v_0)}{|x-y|^\mu} ~dxdy.
\end{align*}
Taking into account above statements, we infer that $ \ds\limsup_{n \ra \infty} \|v_n\|^2 \leq \|v_0\|^2$. Hence upto a subsequence $v_n$ converges to $v_0$  weakly in $X_0$. By Fatou's Lemma and above conclusion one obtains $\| v_n\|\ra \|v\| $. This settles the proof. \\
\textbf{Proof of Theorem \ref{thmch3}: (ii) implies (i).}
To show the result, we will first consider the case $v_0=0 $. It implies that 
\begin{align}\label{ch29}
\inf_{v \in X_0 \cap \bar{B}_\rho ^d(0)} J(v)= J(v_0)=0.
\end{align}
Assume that (i) doesn't hold. Then we can choose $\e_n \in (0,\infty),\; \e_n \ra 0$  such that there exist $z_n \in \bar{B}_{\e_n} ^X(0)$ with
 $J(z_n)<0$. For each $m\in \N$, define the functions $g_m,\;G_m: \overline{\Om} \times \mathbb{R} \ra \mathbb{R}$ and $f_m,\; F_m: \mathbb{R} \ra \mathbb{R}^+$ as 
\begin{align*}
& g_{m}(x,t) =  \max\{g(x,-m) , \min\{  g(x,t),g(x,m)\} \}, \quad G_m(x,t):= \int_0^tg_m(x,\tau) ~d \tau\\
&  \text{ and } f_m(t):= \max\{f(-m) , \min\{  f(t),f(m)\} \},\quad F_m(t):= \int_0^tf_m(\tau) ~d \tau. 
\end{align*}
Subsequently, we define the truncated functional $J_m$ as 
\begin{align*}
J_m(v)= \frac{\|v\|^2}{2} - \int_{ \Om} G_m(x,v)~dx - \frac12 \iint_{ \Om\times \Om} \frac{F_m(v)F_m(v)}{|x-y|^\mu} ~dxdy \text{ for all } v \in X_0. 
\end{align*}
Notice that $J_m \in C^1(X_0)$ and by appealing Dominated convergence theorem, we infer that $J_m(v) \ra J(v)$ as $m \ra \infty$ and  for all $v \in X_0$. 
Thus, for every $n \in \N$ we pick $ m_n \in \N$ such that $J_{m_n}(z_n)<0$. 
Observe that $|G_m(x,v)|\leq (|g(x,-m)|+|g(x,m)|)|v|$ and $F_m(v)|\leq (f(-m)+f(m))|v|$. That is, $G_m$ and $F_m$ has subcritical growth in  the sense of Sobolev inequality and  Hardy-Littlewood-Sobolev inequality respectively. Therefore, $J_m$ is weakly lower semicontinuous functional. Since $ \bar{B}_{\e_n} ^X(0)$ is a closed convex set, it implies that 
there exists $w_n \in \bar{B}_{\e_n} ^X(0)$ such that 
\begin{align*}
J_{m_n}(w_n)= \inf_{v \in \bar{B}_{\e_n} ^X(0)} J_{m_n}(v) \leq J_{m_n}(w_n).
\end{align*}
With the help of Lagrange multiplier's rule, one can easily prove that there exists $\la_n \in (0,1]$ such that $w_n$ is a weak solution of 
\begin{equation*}
\left\{\begin{array}{rllll}
(-\De)^s u
&=\la_n \left(g_{k_n}(x,u)+ \left(\ds \int_{\Om}\frac{F_{k_n}(u)(y)}{|x-y|^{\mu}}dy\right) f_{k_n}(u) \right) \; \text{in}\;
\Om,\\
u&=0 \; \text{ in } \R \setminus \Om.
\end{array}
\right.
\end{equation*}
Since $\|w_n\|\in \bar{B}_{\e_n} ^X(0),\; \|w_n\|\ra 0 $ as $|\e_n| \ra 0 $. It implies  $ |w_n|_{2^*_s} \ra 0$ and hence for $n$ large enough we  can  choose $R=0$ in \eqref{ch42}. Subsequently there exists $K>0$ such that $|w_n|_\infty\leq K$ for all $n$. By appealing \cite[Theorem 1.2]{RS}, we obtain that  for all $n$, $ w_n \in C_d^0(\overline{\Om})$ and $\|w_n\|_{C^{0,\al}_d(\overline{\Om})}\leq K_1$ for some suitable  $K_1>0$. Since $ C^{0,\al}_d(\overline{\Om})$ is compactly embedded into $C_d^0(\overline{\Om}) $, $w_n $ is strongly convergent in $C_d^0(\overline{\Om})$. Consequently, taking in account the fact that $w_n \ra 0 $ a.e in $\Om$, we get $ w_n \ra 0 $ in  $C_d^0(\overline{\Om})$. We conclude that for $n$ large enough,   $\|w_n\|_{C^0_d(\overline{\Om})}\leq \rho$ and $|w_n|_\infty <1$. From this we infer that
\begin{align*}
J(w_m)= J_{m_n}(w_m)<0
\end{align*} 
and we obtain the desired contradiction to the assumption \eqref{ch29}.
Now we will consider the case $v\not = 0$. By given assumption (ii), it follows that $ J^\prime(v_0)(v)=0$ for all $v \in C_c^\infty(\Om)$ and applying the  standard density arguments we infer that 
\begin{align}\label{ch30}
J^\prime(v_0)(v)=0 \text{ for all } v \in X_0. 
\end{align}
In view of Theorem \ref{thmch1}, we have $ u \in L^\infty(\Om) \cap C_d^0(\overline{\Om})$. For all  $v \in X_0$, let
\begin{align*}
 \widehat{F}(x,v) := & \left(\ds \int_{\Om}\frac{F(v_0+v)(y)}{|x-y|^{\mu}}dy\right) F(v_0+v)(x)-  \left(\ds \int_{\Om}\frac{F(v_0)(y)}{|x-y|^{\mu}}dy\right)\left( F(v_0)+2f(v_0)v\right)(x)\\
 \text{and }\quad
& \widehat{G}(x,v) := G(x,(v_0+v)(x)) - G(x,v_0(x))- g(x,v_0(x))v(x).
\end{align*}
Set 
\begin{align*}
\widehat{J}(v)= \frac{\|v\|^2}{2}- \int_{\Om} \widehat{G}(x,v) ~dx -\frac12 \int_{\Om} \widehat{F}(x,v) ~dx \text{ for all } v \in X_0. 
\end{align*}
Note that $ \widehat{J}\in C^1(X_0)$. Employing \eqref{ch30} and the definition of $\widehat{F} $ and $\widehat{G}$, we have 
\begin{align*}
\widehat{J}(v)& = \frac{\|v_0+v\|^2}{2}- \frac{\|v_0\|^2}{2} - \int_{ \Om} G(x,(v_0+v)(x)) - G(x,v_0(x))~dx\\
& \quad  -\frac{1}{2} \iint_{\Om\times \Om}\frac{F(v_0+v)F(v_0+v)}{|x-y|^{\mu}}~dxdy +\frac{1}{2} \iint_{\Om \times \Om}\frac{F(v_0)F(v_0)}{|x-y|^{\mu}}~dxdy\\
& = \widehat{J}(v_0+v)- \widehat{J}(v_0). 
\end{align*}
We may deduce that $\tilde{J}(0)=0$. Therefore given assumptions can be expressed as 
\begin{align*}
\inf_{v \in X_0 \cap \bar{B}_\rho ^d(0)} \widehat{J}(v)=0.
\end{align*}
Now by using above case we get the desired result and hence the proof of (ii) implies (i). \QED

\begin{Theorem}\label{thmch4}
		Let  $ G: \overline{\Om} \times \mathbb{R} \ra \mathbb{R}$ be  a   Carath\'eodory function  satisfying
	\begin{align*}
	g(x, u) =		O(|u|^{2^*_s-1}), & \text{ if } |u|\ra \infty
	\end{align*}
	uniformly for all $ x \in \overline{\Om}$. Let $f$ satisfies $(\mc F)$. Let $f(\cdot)$ and  $G(x,\cdot)$ be non decreasing functions for all $x \in \Om$. Suppose $\underline{w}, \overline{w} \in X_0$ are a weak subsolution and a weak supersolution, respectively to $(P)$, which are not solutions. Then, there exists  a solution $ w_0\in X_0$ to $(P)$ such that $\underline{w} \leq w_0 \leq \overline{w}$ a.e in $\Om$ and $w_0$ is a local minimizer of $J$ on $X_0$. 
\end{Theorem}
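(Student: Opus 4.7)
The plan is to carry out a Perron-type variational scheme adapted to the doubly nonlocal setting (in the spirit of \cite{struwe2}) and then upgrade the resulting minimizer via Theorem \ref{thmch3}. First I would introduce the truncation operator $T(x,t)=\min\{\max\{t,\underline{w}(x)\},\overline{w}(x)\}$ and define $\tilde g(x,t)=g(x,T(x,t))$, $\tilde f(x,t)=f(T(x,t))$, together with their antiderivatives in $t$, $\tilde G(x,t)=\int_0^t\tilde g(x,\tau)\,d\tau$ and $\tilde F(x,t)=\int_0^t\tilde f(x,\tau)\,d\tau$. The truncated functional $\tilde J$ is built as $J$ with $g,\,G,\,f,\,F$ replaced by their tilde counterparts. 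Because $|\tilde g(x,t)|$ and $|\tilde f(x,t)|$ are controlled by $1+|\underline w(x)|^{2^*_s-1}+|\overline w(x)|^{2^*_s-1}$ and $1+|\underline w(x)|^{(2N-\mu)/(N-2s)-1}+|\overline w(x)|^{(2N-\mu)/(N-2s)-1}$ respectively, the Hardy-Littlewood-Sobolev inequality yields that $\tilde J$ is coercive and weakly lower semicontinuous on $X_0$. Hence $\tilde J$ attains its infimum at some $w_0\in X_0$, which is a weak solution of the truncated Euler-Lagrange problem.

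The central step is to prove $\underline w\le w_0\le\overline w$ a.e.\ in $\Om$. I would test the equation solved by $w_0$ against $\phi=(\underline w-w_0)^+\in X_0$ and subtract the inequality satisfied by the subsolution $\underline w$. On $\{\underline w>w_0\}$ the truncation gives $\tilde g(x,w_0)=g(x,\underline w)$ and $\tilde f(x,w_0)=f(\underline w)$, so the local perturbation term vanishes. For the Choquard contribution, after symmetrizing the double integral and using the monotonicity of $f$ one obtains
\begin{equation*}
\iint_{\Om\times\Om}\frac{(F(\underline w)-\tilde F(x,w_0))(y)(f(\underline w)-\tilde f(x,w_0))(x)}{|x-y|^\mu}\,\phi(x)\,dx\,dy\le 0,
\end{equation*}
which combined with the standard inequality $\|\phi\|^2\le\langle w_0-\underline w,\phi\rangle$ forces $\phi\equiv 0$. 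A symmetric argument with $(w_0-\overline w)^+$ delivers $w_0\le\overline w$. On the set $\{\underline w\le w_0\le\overline w\}$ the truncation is inactive, so $w_0$ is a genuine solution of $(P)$.

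For the local minimality in $X_0$, I would first invoke Theorem \ref{thmch1} to obtain $w_0\in L^\infty(\R)\cap C^s(\R)$, and then the boundary regularity of \cite{RS} to place $w_0\in C^0_d(\overline\Om)$. Since $\underline w,\overline w$ are \emph{not} solutions, applying a fractional strong maximum principle and Hopf-type lemma to $w_0-\underline w$ and $\overline w-w_0$ produces strict separation in the weighted sense, namely $\inf_{x\in\Om}(w_0-\underline w)(x)/d^s(x)>0$ and $\inf_{x\in\Om}(\overline w-w_0)(x)/d^s(x)>0$. Consequently there exists $\rho>0$ such that every $v\in X_0\cap\bar B^d_\rho(w_0)$ still satisfies $\underline w\le v\le\overline w$ a.e.\ in $\Om$. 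On that neighborhood the truncated and original integrands coincide up to $x$-dependent constants that cancel in differences, so $J(v)-J(w_0)=\tilde J(v)-\tilde J(w_0)\ge 0$ by the global minimality of $w_0$ for $\tilde J$. Hence $w_0$ is a local minimizer of $J$ in the $C^0_d$ topology, and Theorem \ref{thmch3} promotes it to a local minimizer in the $X_0$ topology.

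The main obstacle will be the comparison step: the Choquard nonlinearity couples the pointwise values of $w_0$ at two distinct points through the Riesz kernel, so the usual testing against $(\underline w-w_0)^+$ does not immediately yield a single-signed term. Exploiting the monotonicity of $f$ through a symmetrization of the double integral, as sketched above, is the key new ingredient compared to the purely local Perron arguments of \cite{struwe2}. A secondary delicate point is the strict separation $\underline w<w_0<\overline w$ in the weighted sense, which requires the fractional Hopf lemma in $C^0_d$, but this is available once the $L^\infty$ bound of Theorem \ref{thmch1} and the regularity of \cite{RS} are in place.
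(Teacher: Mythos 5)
Your plan diverges from the paper at the central step. The paper does not truncate the nonlinearity; it minimizes the \emph{original} functional $J$ over the order interval $W=\{w\in X_0:\underline w\le w\le\overline w\}$ (a closed convex set on which coercivity and weak lower semicontinuity are immediate from the monotonicity of $G(x,\cdot)$ and $f$), obtains a constrained minimizer $w_0$, and then shows $w_0$ is a free critical point by a Perron-type argument: one tests the variational inequality with $u_\e=\min\{\overline w,\max\{\underline w,w_0+\e\phi\}\}=w_0+\e\phi-\phi^\e+\phi_\e$, splits the excess into $A^\e$, $A_\e$, and uses the sub/supersolution property together with the monotonicity of $g(x,\cdot)$ and $f$ to force $\e^{-1}A^\e\ge o(1)$ and $\e^{-1}A_\e\le o(1)$. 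The paper explicitly adopts this route precisely to avoid the truncation approach, remarking that it needs no assumptions on $\underline w,\overline w$ beyond membership in $X_0$.

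Your truncation scheme has a genuine gap at the comparison step. With $\tilde f(x,t)=f(T(x,t))$ and $\tilde F(x,t)=\int_0^t\tilde f(x,\tau)\,d\tau$, the Euler--Lagrange equation for $\tilde J$ features the convolution $\int_\Om\tilde F(y,w_0(y))|x-y|^{-\mu}\,dy$. Testing against $\phi=(\underline w-w_0)^+$ and subtracting the subsolution inequality does annihilate the local term (since $\tilde g(x,w_0)=g(x,\underline w)$ where $\phi>0$) and replaces $\tilde f(x,w_0)$ by $f(\underline w)(x)$ there, but what remains is
\begin{equation*}
\iint_{\Om\times\Om}\frac{\bigl(F(\underline w)(y)-\tilde F(y,w_0(y))\bigr)\,f(\underline w)(x)\,\phi(x)}{|x-y|^{\mu}}\,dx\,dy,
\end{equation*}
and the factor $F(\underline w)(y)-\tilde F(y,w_0(y))$ has no sign: $\tilde F(y,w_0(y))$ depends on $w_0(y)$ at \emph{every} $y\in\Om$, not merely on the contact set, and is not equal to $F(\underline w(y))$ even on $\{w_0<\underline w\}$ (there one computes $\tilde F(y,w_0(y))=f(\underline w(y))w_0(y)\ne F(\underline w(y))$), while on $\{\underline w\le w_0\le\overline w\}$ one gets $\tilde F(y,w_0(y))=f(\underline w(y))\underline w(y)+F(w_0(y))-F(\underline w(y))$, again with no definite comparison to $F(\underline w(y))$. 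The Riesz kernel couples all points of $\Om$, so the usual decoupling that makes the truncation argument work in the local case fails here. In addition, the inequality you display is not what the subtraction produces: the difference of products $F(\underline w)(y)f(\underline w)(x)-\tilde F(y,w_0)\tilde f(x,w_0)$ does not factor as a product of differences $(F(\underline w)-\tilde F)(y)\cdot(f(\underline w)-\tilde f)(x)$, and on $\{\phi>0\}$ the second factor you wrote is identically zero, so your displayed integral is $0$, not a useful bound. Because of this, the conclusion $\underline w\le w_0\le\overline w$ is not established.

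Your final paragraph (using Theorem \ref{thmch1}, the boundary regularity of \cite{RS}, the fractional Hopf/strong maximum principle applied to $w_0-\underline w$ and $\overline w-w_0$ to obtain strict separation in $C^0_d$, and then Theorem \ref{thmch3} to upgrade $C^0_d$-minimality to $X_0$-minimality) matches the paper's argument and is sound, once a constrained minimizer $w_0$ in $W$ is available. I suggest replacing your truncation step with the constrained minimization of $J$ over $W$ and the Perron variational-inequality test from the paper; that removes the problematic comparison argument entirely.
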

\begin{proof}
	Consider a closed convex  set $W$ of $X_0$ as 
\begin{align*}
W: = \{ w \in X_0\; :\;  \underline{w} \leq w_0 \leq \overline{w} \text{ a.e in } \Om   \}.
\end{align*}
Using the definition of $W$, one can easily prove that 
\begin{align*}
J(w)\geq \frac{\|w\|^2}{2}- c_1-c_2
\end{align*}
for  appropriate positive constants $c_1$  and $c_2$. This implies $J$ is coercive on $W$. $J$ is weakly lower semi continuous on $W$. Indeed, let $\{ v_n\}  \subset W$ such that $v_n \rp v$ weakly in $X_0$ as $n\ra \infty$. For each $n$, 
\begin{align*}
& \int_{ \Om} G(x,v_n)~dx \leq \int_{ \Om} G(x,v)~dx< +\infty,\\
&  \iint_{\Om \times \Om}\frac{F(v_n)F(v_n)}{|x-y|^{\mu}}~dxdy\leq  \iint_{\Om \times \Om}\frac{F(\overline{w})F(\overline{w})}{|x-y|^{\mu}}~dxdy< +\infty.
\end{align*}
Now we may invoke Dominated convergence theorem  and the weak lower semicontinuity of norms to get that $J$ is weakly lower semi continuous on $W$. Hence, there exists $ w_0 \in X_0$  such that 
\begin{align}\label{ch31}
\inf_{w \in W} J(w) = J(w_0).
\end{align}
\textbf{Claim:} $w_0$ is a weak solution to $(P)$. \\
Let $\phi \in C_c^\infty(\Om)$ and $\e>0$. Define 
\begin{align*}
u_\e = \min\{ \overline{w}, \max\{  \underline{w}, w_0+ \e\phi\} \}= v_0+\e\phi - \phi^{\e}+\phi_{\e}
\end{align*}
where $\phi^{\e}= \max\{  0, w_0+ \e\phi -\overline{w} \}$ and $\phi_{\e}= \max\{  0,  \underline{w}-w_0- \e\phi\}$. Observe that $\phi_{\e},\phi^{\e} \in X_0 \cap L^\infty(\Om)$. In view of the fact that $w_0+ t(u_\e-w_0) \in W$ for all $ t \in (0,1)$ and \eqref{ch31}, we obtain
\begin{align*}
\int_{ \R}(-\De)^s w_0 (u_\e-w_0)~dx - \int_{\Om} g(x,w_0)(u_\e-w_0)~dx- \iint_{\Om \times \Om}\frac{F(w_0)f(w_0) (u_\e-w_0)}{|x-y|^{\mu}}~dxdy \geq0. 
\end{align*}
Set 
\begin{align*}
& A^\e= \int_{ \R}(-\De)^s (w_0-\overline{w}) \phi^{\e} ~dx + \int_{ \R}(-\De)^s \overline{w}\phi^{\e} ~dx- \int_{\Om} g(x,w_0)\phi^{\e}~dx\\
 &\qquad - \iint_{\Om \times \Om}\frac{F(w_0)f(w_0) \phi^{\e}}{|x-y|^{\mu}}~dxdy, \\
& A_\e= \int_{ \R}(-\De)^s (w_0-\underline{w}) \phi_{\e} ~dx + \int_{ \R}(-\De)^s \underline{w}\phi_{\e} ~dx- \int_{\Om} g(x,w_0)\phi_{\e}~dx\\
&\qquad - \iint_{\Om \times \Om}\frac{F(w_0)f(w_0) \phi_{\e}}{|x-y|^{\mu}}~dxdy .
\end{align*}
Then by simple computations, we get 
\begin{align}\label{ch32}
\int_{ \R}(-\De)^s w_0 \phi~dx - \int_{\Om} g(x,w_0)\phi~dx- \iint_{\Om \times \Om}\frac{F(w_0)f(w_0) \phi}{|x-y|^{\mu}}~dxdy \geq \frac{1}{\e} \left( A^\e- A_\e \right). 
\end{align}
Using the assertions as in  \cite[Propostion 3.2]{gts1} with $\overline{w}$ in spite of $u_{\la^\prime}$,  we have 
\begin{align*}
\frac{1}{\e}\int_{ \R}(-\De)^s (w_0-\overline{w}) \phi^{\e} ~dx\geq o(1) \text{ as } \e \ra 0^+.
\end{align*}
To this end, employing the fact that $\overline{w} $, we deduce that 
\begin{align*}
& \frac{1}{\e} \int_{ \R}(-\De)^s \overline{w}\phi^{\e} ~dx-  \frac{1}{\e}\int_{\Om} g(x,w_0)\phi^{\e}~dx-  \frac{1}{\e}\iint_{\Om \times \Om}\frac{F(w_0)f(w_0) \phi^{\e}}{|x-y|^{\mu}}~dxdy\\
&\geq   \frac{1}{\e} \int_{\Om}(g(x,\overline{w})- g(x,w_0))\phi^{\e}~dx+ \frac{1}{\e} \iint_{\Om \times \Om}\frac{(F(\overline{w})f(\overline{w})-F(w_0)f(w_0)) \phi^{\e}}{|x-y|^{\mu}}~dxdy\\
& \geq   \frac{1}{\e} \int_{\Om}(g(x,\overline{w})- g(x,w_0))\phi^{\e}~dx = o(1) \text{ as } \e \ra 0^+.
\end{align*}
Hence we infer that $ \frac{1}{\e} A^{\e}\geq o(1)\text{ as } \e \ra 0^+$. On the similar lines, one can prove that $ \frac{1}{\e} A_{\e}\leq o(1)\text{ as } \e \ra 0^+$.  From \eqref{ch32},  for all  $\phi \in C_c^\infty(\Om)$,  it follows that 
\begin{align}
\int_{ \R}(-\De)^s w_0 \phi~dx - \int_{\Om} g(x,w_0)\phi~dx- \iint_{\Om \times \Om}\frac{F(w_0)f(w_0) \phi}{|x-y|^{\mu}}~dxdy \geq 0 \text{ as } \e \ra 0^+.
\end{align}
As $\phi \in C_c^\infty(\Om)$ was arbitrarily chosen, it implies that $w_0$ is weak solution to $(P)$. From this, we follows that there exists  a solution $ w_0\in X_0$ to $(P)$ such that $\underline{w} \leq w_0 \leq \overline{w}$ a.e in $\Om$. To prove that $w_0$ is a local minimizer in $X_0$, we proceed as follows.  Using Theorem \ref{thmch1} and  \cite[Theorem 1.2]{RS}, we deduce $ w_0 \in C^{0,\al}_d(\overline{\Om})$. Now consider
\begin{align*}
(-\De)^s(w_0-\underline{w})& \geq( g(x,w_0)- g(x,\underline{w})) +  \left( \int_{\Om }\frac{F(w_0)}{|x-y|^{\mu}}~dy\right) f(w_0)- \left( \int_{\Om }\frac{F(\underline{w})}{|x-y|^{\mu}}~dy\right) f(\underline{w})\\
& \geq 0 .
\end{align*}
 Using the fact that $\underline{w}$ is not solution to $(P)$, we have  $w_0 \not = \underline{w}$ and  by definition, $w_0-\underline{w}\geq 0$ in $\R \setminus \Om$. From \cite[Lemma 2.7]{squassina}, we infer that $w_0 -\underline{w}>Cd^s$ for some $C>0$. On a similar note $\overline{w}-w_0 > Cd^s$ for some $C>0$.  For each $w \in \bar{B}^d_{C/2}(w_0)$, we have  
\begin{align*}
\frac{w_0 -\underline{w}}{d^s} = \frac{w_0 -w}{d^s}+ \frac{w -\underline{w}}{d^s} \geq  \frac{C}{2}. 
\end{align*}
From above, it can read that $w_0 -\underline{w}>0 $ in $\Om$. Likewise, $\overline{w}-w_0>0$ in $\Om$. Therefore,  $w_0$ emerge as a local minimizer of $J$ on $X_0\cap \bar{B}^d_{C/2}(w_0)$ and this completes the proof. \QED
\end{proof}
\begin{Remark}
Consider the following problem 
\begin{equation}\label{ch33}
\left\{\begin{array}{rllll}
(-\De)^s u
&=\la \left( |u|^{q-2}u + \left(\ds \int_{\Om}\frac{F(u)(y)}{|x-y|^{\mu}}dy\right) f(u)\right) ,\; u>0 \; \text{in}\;
\Om,\\
u&=0 \; \text{ in } \R \setminus \Om,
\end{array}
\right.
\end{equation}
where $\la>0,\; 1<q<2$ and $f$  is a non decreasing function and  satisfies $(\mc F)$. Let $\underline{v}$ denotes the solution to 
\begin{equation*}
(-\De)^s u
=\la |u|^{q-2}u , \; u>0  \; \text{in}\;
\Om,
u=0 \; \text{ in } \R \setminus \Om,
\end{equation*}
and let $\overline{v}$ is a solution to 
\begin{equation*}
(-\De)^s u
=1 , \; u>0  \; \text{in}\;
\Om,
u=0 \; \text{ in } \R \setminus \Om.
\end{equation*}
Then  for all $\la>0,\; \underline{v}$ is a subsolution to \eqref{ch33}. And for $\la$ small enough, $\overline{v}$ is a supersolution to \eqref{ch33}. Now using Theorem \ref{thmch4}, there exists a solution to \eqref{ch33}, which is a local minimizer in $X_0$. The moutain pass lemma provides then the existence of a second solution.
\end{Remark}

\end{document}